\newtheorem{theorem}{Theorem}[section]
\newtheorem{corollary}[theorem]{Corollary}
\newtheorem{lemma}[theorem]{Lemma}
\newtheorem{prop}[theorem]{Proposition}
\newtheorem{conj}[theorem]{Conjecture}
\theoremstyle{definition}
\newtheorem{remark}[theorem]{Remark}
\numberwithin{equation}{section}
\newcommand{\RR}{\mathbb{R}}
\newcommand{\NN}{\mathbb{N}}
\newcommand{\QQ}{\mathbb{Q}}
\newcommand{\ZZ}{\mathbb{Z}}
\newcommand{\fU}{\mathfrak{U}}
\newcommand{\diag}{\mathrm{diag}}
\newcommand{\dist}{\mathrm{dist}}
\newcommand{\SL}{\mathrm{SL}}
\newcommand{\GL}{\mathrm{GL}}
\newcommand{\SO}{\mathrm{SO}}
\newcommand{\Ad}{\mathrm{Ad}}
\renewcommand{\ker}{\mathrm{Ker}}
\newcommand{\T}{\mathrm{T}}
\newcommand{\D}{\mathfrak {D}}
\newcommand{\dd}{\; \mathrm{d}}
\newcommand{\Lg}{\mathfrak{g}}
\newcommand{\Lh}{\mathfrak{g}}
\newcommand{\Lr}{\mathfrak{r}}
\newcommand{\df}{{\, \stackrel{\mathrm{def}}{=}\, }}
\newcommand {\ignore}[1]  {}
\newif\ifdraft\drafttrue
\newcommand{\ggm}{G/\Gamma}
\renewcommand{\setminus}{\smallsetminus}
\begin{document}
	
	\title{Hausdorff dimension of divergent trajectories on homogeneous space}

	\author{Lifan Guan}
	\address{Department of Mathematics, University of York, Heslington, York, YO10
5DD, United Kingdom}
\email{lifan.guan@york.ac.uk}
	\email{}
		\thanks{L. G. is supported by EPSRC Programme Grant EP/J018260/1}

	\author{Ronggang Shi}
	\address{Shanghai Center for Mathematical Sciences, Fudan University, Shanghai 200433, PR China}
	\email{ronggang@fudan.edu.cn}
	\thanks{}
	

	\subjclass[2000]{Primary  37A17; Secondary 11K55, 37C85.}
	
	\date{}
	
	
	\keywords{homogeneous dynamics,  divergent trajectory, Hausdorff dimension}

	\begin{abstract}
	For one parameter subgroup action on a  finite volume homogeneous space,  we
	consider the set of points admitting divergent on average trajectories. We
	show that  the Hausdorff dimension of this set
	 is strictly less than the manifold
	dimension of the homogeneous space. As a corollary we know  that the Hausdorff dimension of the  set of
	points admitting divergent trajectories is not full, which proves a conjecture of Y. Cheung \cite{c11}.
	\end{abstract}

	\maketitle
	
	\markright{}
	
\section{Introduction}

Let $G$ be a connected Lie group, $\Gamma$ be a lattice of $G$\footnote{A discrete subgroup $\Gamma<G$ is called a lattice if there exists a finite $G$-invariant measure on the homogeneous space $\ggm$.} and $F=\{f_t: t\in \RR \}$ be a one parameter  subgroup of
$G$. The   action of $F$ on the homogeneous space  $G/\Gamma$ by left translation defines a flow. In this paper we consider the dynamics of the  semiflow given  by the action of  $F^+
\df \{ f_t: t\ge 0 \}$.
For $x\in G/\Gamma$ we say
 the trajectory  $F^+x\df \{f_t x: t\ge 0 \}$ is \emph{divergent} if
  $f_t x$ leaves  any fixed compact  subset of $G/\Gamma$ provided $t$ is   sufficiently large. We say $F^+x$ is  \emph{divergent on average}  if
for any characteristic function $\mathbbm {1}_K$ of a compact subset $K$ of $G/\Gamma $ one has
\[
\lim\limits_{T\to \infty}\frac{1}{T}\int_0^T \mathbbm{1}_K (f_t x)\dd t= 0.
\]
Clearly, if the trajectory $F^+x$ is divergent, then it is  divergent on average.
The aim of this paper is to understand the set of divergent     points
$$
\D'(F^+, G/\Gamma)\df\{x\in \ggm: F^+x \text{ is divergent}\},
$$
and the set of divergent on average   points
$$\D(F^+, G/\Gamma)\df\{x\in \ggm: F^+x\text{ is divergent on average}\}, $$
in terms of their Hausdorff dimensions. Here the Hausdorff dimension is defined by attaching $\ggm$ with a Riemannian metric. It is well-known that different choices of Riemannian metrics will not affect the Hausdorff dimension of subsets of $\ggm$. Indeed, specific Riemannian metric will be used later for the  sake of convenience.

According to the work of Margulis \cite{Ma} and Dani \cite{d84}\cite{d86}, it is
well-known that if $F$ is   $\Ad$-unipotent then the space $G/\Gamma$ admits  no divergent  on average   trajectories of   $F^+$. In other words, the set $\D(F^+, G/\Gamma)$, hence the set $\D'(F^+, G/\Gamma)$, is empty.
On the other hand, the set $\D(F^+, G/\Gamma)$ can be complicated when $F$ is $\Ad$-diagonalizable.
 For example, it was proved by  Cheung in \cite{c11} that
the Hausdorff dimension of  $\D'(F^+, \SL_3(\mathbb R)/\SL_3(\mathbb Z))$ with $F=\{\diag(e^t, e^t, e^{-2t}): t\in  \RR\}$
is equal to $7\frac{1}{3}$.  Based on his results, Cheung raised the following conjecture in \cite{c11}.
\begin{conj}
Let $\Gamma$ be a lattice  of a connected Lie group  $G$  and
	let $F=\{f_t: t\in  \RR \}$ be a one parameter subgroup of $G$. Then the Hausdorff  dimension of
	$\D'(F^+, G/\Gamma)$
	is strictly less  than the manifold dimension of $G/\Gamma$.
\end{conj}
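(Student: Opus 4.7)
The plan is to prove the stronger assertion that $\dim_H \D(F^+, \ggm) < \dim(\ggm)$: since every divergent trajectory is a fortiori divergent on average, this immediately implies the conjecture. The overall route mirrors the Kadyrov--Kleinbock--Lindenstrauss--Margulis treatment of singular vectors on $\SL_n(\RR)/\SL_n(\ZZ)$, namely: reduce to an $\Ad$-diagonalizable generator, construct a proper height function on $\ggm$ that contracts on average along $F^+$, and convert that contraction into a covering estimate for the exceptional set.

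First, I would apply the multiplicative Jordan decomposition to $\Ad(f_1)$, writing $f_t = s_t u_t$ with $s_t$ being $\Ad$-diagonalizable, $u_t$ being $\Ad$-unipotent, and $[s_t,u_t]=0$. The $\Ad$-unipotent flow cannot by itself produce divergent or divergent-on-average orbits by the Dani--Margulis nondivergence theorem cited in the excerpt, so divergence on average for $\{f_t\}$ is equivalent to divergence on average for $\{s_t\}$, and we may assume $F$ is $\Ad$-diagonalizable. This yields a decomposition $\Lie(G) = \Lg^+\oplus\Lg^0\oplus\Lg^-$ into expanding, neutral, and contracting eigenspaces for $\Ad(f_1)$, together with local product coordinates on $\ggm$ adapted to the flow.

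The main analytic input I would seek is a proper function $\phi:\ggm\to [1,\infty)$ together with constants $0<c<1$ and $b,t_0>0$ such that
\[
\int_B \phi(f_t h x)\dd h \;\le\; c\,\phi(x) + b \qquad (t\ge t_0,\ x\in\ggm),
\]
where $B$ is a fixed small neighbourhood of the identity in the expanding horospherical subgroup $H^+=\exp\Lg^+$ and Haar measure on $H^+$ is normalised so that $\Vol(B)=1$. Such integral contraction inequalities are known for $\SL_n(\RR)/\SL_n(\ZZ)$ through Eskin--Margulis and for semisimple $G$ through Benoist--Quint, built from sums of norms of lattice vectors in suitable linear representations whose highest weights detect the $f_t$-contracting cuspidal directions. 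For arbitrary connected $G$ one would assemble $\phi$ through the Levi decomposition, pulling back height functions from the maximal semisimple quotient and handling the solvable radical by a direct analysis of the nilmanifold fibre.

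With $\phi$ in hand I would iterate the inequality and combine it with a Markov--Chebyshev covering argument. Divergence on average forces, for every $N$, the density of times at which $\phi(f_t x)\ge N$ to tend to $1$; iterating the displayed inequality $k$ times and applying Chebyshev bounds the Haar measure of the set of $h\in B$ for which $f_{jt_0}hx\in\{\phi\le N\}$ for at least half of $0\le j\le k$ by $e^{-\eta k t_0}$, with $\eta>0$ independent of $k$ and $N$. Pulling back through $h\mapsto f_{-kt_0}hx$, which expands $H^+$ with Jacobian $e^{\lambda kt_0}$, converts this measure bound into a cover of the divergent-on-average slice of $H^+x$ by at most $e^{(\dim H^+-\eta/\lambda)kt_0}$ geometric balls of radius $e^{-\lambda kt_0}$; letting $k\to\infty$ gives a transversal dimension deficit $\eta/\lambda$. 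A Marstrand-type slicing inequality over the centralizer and stable directions then produces the final bound $\dim_H\D(F^+,\ggm)\le \dim G-\eta/\lambda<\dim\ggm$. The main obstacle will lie in the second step: the Margulis function must be built uniformly for arbitrary connected $G$ and lattice $\Gamma$, where $G$ is neither required to be semisimple nor to carry an algebraic $\QQ$-structure, so the representation-theoretic recipe must be adapted to accommodate the solvable radical, the twisting by unipotent factors, and the delicate interplay between $\Lg^-$ and the geometry of the cusps of $\ggm$.
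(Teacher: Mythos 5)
Your overall strategy --- build a proper Eskin--Margulis height function that contracts on average along the expanding horospherical group, iterate via a Markov--Chebyshev argument to get exponential measure decay, translate to a box covering via the Besicovitch property and the expansion Jacobian, and close with Marstrand slicing --- is the paper's strategy, and those steps are sound. The fatal gap is in your reduction to an $\Ad$-diagonalizable generator. You decompose $f_t = s_t u_t$ and assert that divergence on average of $F^+x$ is equivalent to divergence on average of $\{s_t\}^+x$, citing Dani--Margulis. That theorem only says a \emph{purely} unipotent one-parameter flow has no divergent-on-average orbits; it says nothing about a product of commuting diagonal and unipotent flows. The two trajectories $f_tx=a_tu_tx$ and $a_tx$ are not asymptotic: by right invariance, $\dist(a_tu_tg,a_tg)=\dist(u_t,1_G)$, which grows polynomially, so the composite flow is the diagonal flow applied to the \emph{moving} point $u_tx$, and its recurrence need not agree with that of $a_t$ applied to $x$. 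A direct Diophantine computation in $\SL_3(\RR)/\SL_3(\ZZ)$ with $a_t=\diag(e^t,e^t,e^{-2t})$, $u_t=\mathbbm{1}+tE_{12}$ shows the small-vector conditions for $a_tu_t\Lambda$ and $a_t\Lambda$ differ by an unbounded $t$-dependent shear, so the two divergent-on-average sets are genuinely different in general. The paper makes exactly this point: it states that when $U_F\neq 1$ ``essential new ideas are needed'' and exhibits $G=\SL_4(\RR)\times\SL_4(\RR)$, $\Gamma=\SL_4(\ZZ[\sqrt 2])$ diagonally embedded, where irreducibility of $\Gamma$ prevents projecting out the unipotent drift.

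This is also precisely where your height-function step would fail even after the (invalid) reduction. Iterating the integral inequality involves $f_{nt}h_n\cdots f_th_1x$ with $h_i$ in the expanding group; re-centering this as $f_{nt}$ times a product of nearby elements pushes the unipotent factor of $f_{nt}$ into the centralizer $G^c$ of $a_1$, where it drifts polynomially without bound as $n\to\infty$. What is needed is not a single contraction inequality for $\phi$ along $F$, but one \emph{uniform} over the entire family of conjugate flows $F_z=z^{-1}Fz$ for $z$ in a neighbourhood of $1_G$ in $G^c$, strong enough that the polynomial unipotent growth is dominated by the exponential contraction rate no matter where the orbit has drifted. That uniform $(C,\alpha)$-good estimate over the twisted family --- established by projecting to the top Lyapunov subspace of each $H$-isotypic component and controlling all the $\varphi_v$ at once --- is the core technical contribution of the paper (its \S3--\S4), and nothing in your proposal supplies it. You do list ``the twisting by unipotent factors'' as an obstacle at the end, but that contradicts your earlier claim that the unipotent part can be discarded; it cannot, and handling it is what takes the theorem beyond the already-known diagonalizable case of Kadyrov--Kleinbock--Lindenstrauss--Margulis.
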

The conjecture is known to be true in the following cases where $G$ is a semisimple
Lie group without compact factors and  $F$ is $\Ad$-diagonalizable:
\begin{enumerate}
  \item $G$ is of rank one \cite{Dani}.
  \item $G=\prod_{i=1}^{n}\SO(n,1)$, $\Gamma=\prod_{i=1}^{n}\Gamma_i$ with each $\Gamma_i$ lattice in $\SO(n,1)$ and $F< G$ the is  diagonal embedding of any one parameter  real split torus $A$ of $\SO(n,1)$ \cite{c07}\cite{yang}.
  \item $\ggm=\SL_{m+n}(\RR)/\SL_{m+n}(\ZZ)$ and $F=F_{n,m}=\{\diag(e^{nt},\ldots,  e^{nt}, e^{-mt}, \ldots, e^{-mt}): t\in \RR \}$ with $m,n\ge 1$ \cite{c11}\cite{cc16}\cite{KKLM}.
\end{enumerate}
Indeed, for all the cases listed above, the Hausdorff dimension of the corresponding $\D'(F^+, G/\Gamma)$ have been determined.

There are evidences that a stronger version of this conjecture is true.
It was proved
 by
Einsiedler-Kadyrov in \cite{ek12} that the Hausdorff dimension of   $\D(F^+, \SL_3(\mathbb R)/\SL_3(\mathbb Z))$ is at most $7\frac{1}{3}$ when $F=F_{1,2}$ as in (3). Using the contraction property of the height function introduced in \cite{emm98}, it was proved
by Kadyrov, Kleinbock, Lindenstrauss  and  Margulis in \cite{KKLM} that for any $m,n\ge 1$, the Hausdorff dimension of
$\D(F^+, \SL_{m+n}(\mathbb R)/\SL_{m+n}(\mathbb Z))$  is at most  $\dim G- \frac{mn}{m+n}$ when $F=F_{n,m}$ as in (3).
See also \cite{elmv}\cite{k}\cite{kp}\cite{lsst}\cite{dfsu}\cite{weiss} for related results.

Now we state the main result of this paper, from which Cheung's conjecture follows.
\begin{theorem}\label{t-main}
	Let $\Gamma$ be a lattice  of a connected Lie group  $G$  and
	let $F=\{f_t: t\in  \RR \}$ be a one parameter subgroup of $G$. Then the Hausdorff  dimension of
	$\D(F^+, G/\Gamma)$
	is strictly less  than the manifold dimension of $G/\Gamma$.
\end{theorem}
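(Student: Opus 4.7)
The strategy is to extend the height-function and covering-argument template of \cite{KKLM} from $\SL_{m+n}(\RR)/\SL_{m+n}(\ZZ)$ to an arbitrary pair $(G,\Gamma)$. Write $f_1=\exp(X)$ and split $X=X_h+X_c$, where $X_h$ is the $\Ad$-hyperbolic component (real-diagonalisable) and $X_c$ is quasi-unipotent, meaning $\Ad(\exp(tX_c))$ has all eigenvalues of modulus $1$; these commute. If $X_h=0$, then $F$ is quasi-unipotent and the Dani--Margulis non-divergence estimate gives $\D(F^+,\ggm)=\varnothing$. So we may assume $X_h\ne 0$, producing a nontrivial decomposition $\Lg=\Lg^+\oplus\Lg^0\oplus\Lg^-$ into expanding, neutral, and contracting subspaces for $\Ad(f_t)$, $t>0$.

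The core of the proof is the construction of a proper height function $u:\ggm\to[1,\infty)$ satisfying an integral contraction inequality
\[
\frac{1}{T}\int_0^T u(f_tx)\dd t\le\lambda\, u(x)+C
\]
for all $x\in\ggm$, with fixed $\lambda<1$ and $C>0$ and sufficiently large $T$. In the arithmetic semisimple setting of \cite{KKLM}, such $u$ is assembled from Siegel--Eskin--Margulis $\alpha$-functions on exterior powers of the standard representation, and $\lambda$ is controlled by the expansion rate of $\Ad(f_t)$ on $\Lg^+$. For general $(G,\Gamma)$ the plan is to combine the Levi decomposition $G=L\ltimes R$ with reduction theory on $\ggm$: decompose its non-compact part into finitely many cuspidal ends, each controlled by a suitable horospherical or parabolic subgroup, build a local height on each end that contracts along $\Lg^+$, and glue these into a global $u$.

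Granted $u$, one runs the covering scheme of \cite{KKLM}. Fix $t_0$ large enough that the integral inequality holds with $T=t_0$, let $\kappa$ be the top Lyapunov exponent of $\Ad(f_{t_0})$, and partition a fundamental domain by balls of radius $e^{-n\kappa}$. Iterating the inequality at times $kt_0$ together with Chebyshev-type bounds controls the number of such balls meeting the long-excursion set
\[
\bigl\{x:u(f_{t_0}^k x)\ge R\text{ for all }0\le k<n\bigr\}
\]
for fixed large $R$; the exponential decay of this count against the ambient growth rate $e^{n\kappa\dim G}$ yields a Hausdorff dimension bound of the form $\dim(\ggm)-\eta$ with $\eta>0$ proportional to $\log(1/\lambda)$. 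Any $x\in\D(F^+,\ggm)$ eventually spends at least a $(1-\epsilon)$-proportion of its discrete orbit in $\{u\ge R\}$, which is enough for the same covering bound after adjusting constants; this completes the proof.

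The principal obstacle is the construction of $u$ outside the arithmetic $\SL_n$ setting. The KKLM function relies crucially on the linear $\SL_{m+n}$-representation on $\bigwedge^{\bullet}\RR^{m+n}$ and on $\ZZ$-integrality of $\SL_{m+n}(\ZZ)$; in general one has neither a canonical representation nor integrality, and $\Gamma$ may project nontrivially onto both $L$ and $R$ in the Levi decomposition. Handling these ``twisted'' lattices requires either an arithmeticity input (via Margulis for higher-rank simple factors) combined with a separate treatment of the radical $R$, or a purely geometric construction of $u$ via the systole and the expansion rate of $\Ad(f_t)$. A secondary difficulty is the presence of the centralizer $\Lg^0$ of $X_h$: orbits may drift into cusps along neutral directions, and one must exploit quasi-unipotent non-divergence within $\Lg^0$ to ensure this drift does not destroy the integral contraction of $u$.
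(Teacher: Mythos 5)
Your architecture is the right one, and matches the paper's in broad strokes: real Jordan decomposition of $f_1$ into a hyperbolic part and a quasi-unipotent part, dispose of the quasi-unipotent case by Dani--Margulis non-divergence, and otherwise prove a contraction inequality for a height function and run a covering argument. But there are several gaps, and in one place the key technical novelty of the paper is replaced by a hand-wave that would not survive.

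First, the contraction inequality you write down,
\[
\frac{1}{T}\int_0^T u(f_t x)\dd t \le \lambda\, u(x) + C,
\]
is the \emph{time} average used for non-escape-of-mass statements. For a Hausdorff-dimension upper bound one needs the \emph{spatial} average over a ball in the expanding horospherical subgroup $G^+$,
\[
\int_{B_1^+} u(f_t h x)\dd\mu(h) \le e^{-\alpha t}u(x) + b,
\]
because the covering argument covers $B_1^+$ by balls of radius $e^{-\lambda t N}$ and one estimates the number of such balls meeting a long-excursion set by a random-walk/convolution argument on $G^+$. The ambient rate you compare against is then $e^{d\lambda tN}$ with $d=\dim G^+$, not $e^{n\kappa\dim G}$; the Hausdorff dimension bound is against $\dim(\ggm)$ only after assembling the $G^-$, $G^c$, $G^+$ directions and invoking that $\D(F^+,\ggm)$ is $G^-$-invariant (so $G^-$ contributes its full dimension, only $G^+$ loses, and $G^c$ is covered trivially).

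Second, the reduction from a general connected Lie group to a center-free semisimple linear group is a real step. The paper quotients by the maximal amenable ideal of $\Lg$; the image lattice $\pi(\Gamma)$ is a lattice, the kernel meets $\Gamma$ cocompactly, and then Marstrand's product theorem transfers a dimension deficiency from $S/\pi(\Gamma)$ back to $G/\Gamma$. Your phrase ``combine the Levi decomposition with reduction theory'' gestures at this but does not give a mechanism; without something like the amenable-radical trick there is no reason a dimension drop on a factor implies a dimension drop upstairs.

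Third, and most importantly, you correctly flag that ``orbits may drift into cusps along neutral directions,'' but the remedy you offer (``exploit quasi-unipotent non-divergence within $\Lg^0$'') is not what makes the argument close. The actual issue is that when one covers the $G^c$-direction in the fundamental domain, the contraction inequality must hold not just for $F^+$ but for the entire family of conjugated flows $F_z^+=\{z^{-1}f_t z: t\ge 0\}$, \emph{uniformly} over $z$ in a ball $B_1^c$ of the centralizer. The paper devotes \S3 and \S4 to establishing exactly this uniform contraction of the Eskin--Margulis height function, absorbing the polynomial growth coming from the unipotent part $u_t^z$ (see Lemma~\ref{l-v+-} through Lemma~\ref{lem;key}). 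Without this uniformity the Besicovitch covering of $B_1^c\times B_1^+$ in Lemma~\ref{l-main} does not assemble into a single estimate, and the drift you identify is not controlled. This is the genuinely new ingredient compared to \cite{KKLM}, and it is missing from your proposal.
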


We will reduce the proof of  Theorem \ref{t-main}  to
the special case where $G$ is a semisimple linear group.
 Recall that a  connected  semisimple  Lie group $G$ contained in $\SL_k(\RR)$ has a natural structure of
 real algebraic group. So terminologies of  algebraic groups  have natural meanings
 for $G$ and  are independent of the embeddings of $G$ into $\SL_k(\RR)$. In particular,
 the one parameter group $F$ has the following real Jordan decomposition
which  is a special case  of \cite[Theorem 4.4]{borel}.
 \begin{lemma}\label{l-RJD}
 	Let $G\le \SL_k(\RR)$ be a connected  semisimple Lie group. For any one parameter subgroup $F=\{f_t: t\in \RR\}$, there are uniquely determined  one parameter subgroups $K_F=\{k_t:t\in \RR\}$, $A_F=\{a_t:t\in \RR\}$ and $U_F=\{u_t:t\in \RR\}$ with the following properties:
 	\begin{itemize}
 		\item $f_t=k_ta_tu_t$.
 		\item $K_F$ is bounded, $A_F$ is $\RR$-diagonalizable and  $U_F$ is unipotent.
 		\item All the  elements of  $K_F$, $A_F$ and $U_F$ commute with each other.
 	\end{itemize}
 \end{lemma}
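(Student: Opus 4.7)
The plan is to produce the decomposition at the Lie algebra level and then exponentiate. Write $f_t = \exp(tX)$ for the unique generator $X \in \Lie(G)$. The goal becomes to decompose $X$ as $X = X_e + X_h + X_n$ inside $\Lie(G)$, with the three summands pairwise commuting, $X_e$ having purely imaginary eigenvalues in any faithful linear realisation, $X_h$ being $\RR$-diagonalizable, and $X_n$ nilpotent.

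Such a decomposition is the real Jordan decomposition in a real semisimple Lie algebra and is essentially the infinitesimal form of \cite[Theorem 4.4]{borel}. Its existence with $X_e, X_h, X_n \in \mathfrak{sl}_k(\RR)$ and the stated spectral properties follows from the usual (real) matrix Jordan decomposition: first split $X$ into its commuting semisimple and nilpotent parts, then split the semisimple part further over $\RR$ into its $\RR$-diagonalizable and elliptic summands (each being a real polynomial in $X$, hence commuting with each other and with $X_n$). The step I expect to require the most care—the main obstacle here—is showing that $X_e, X_h, X_n$ actually lie in $\Lie(G)$ rather than merely in $\mathfrak{sl}_k(\RR)$; this uses that $\Lie(G)$ is algebraic and semisimple, so the abstract Jordan components of any of its elements remain in $\Lie(G)$ and agree with the matrix Jordan components under every faithful representation.

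Once the Lie-algebra decomposition is in hand, I set $k_t \df \exp(tX_e)$, $a_t \df \exp(tX_h)$ and $u_t \df \exp(tX_n)$. Pairwise commutativity of $X_e, X_h, X_n$ yields both the factorization $f_t = \exp(tX_e)\exp(tX_h)\exp(tX_n) = k_t a_t u_t$ and the elementwise commutativity of $K_F, A_F, U_F$ with one another. The remaining properties are then immediate: $K_F$ lies in a compact torus since $\exp(tX_e)$ has spectrum on the unit circle, $A_F$ is $\RR$-diagonalizable because $X_h$ is, and $U_F$ is unipotent because $X_n$ is nilpotent. For uniqueness, I take any other decomposition $f_t = k_t' a_t' u_t'$ satisfying the three listed conditions, fix a single $t$, and invoke uniqueness of the real Jordan decomposition of the individual group element $f_t \in \SL_k(\RR)$: since $k_t', a_t', u_t'$ pairwise commute and have the requisite spectral types, they must coincide with $k_t, a_t, u_t$, pinpointing the three one-parameter subgroups uniquely.
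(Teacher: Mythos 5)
Your proposal is correct. The paper itself offers no proof of this lemma: it simply declares the statement to be a special case of \cite[Theorem 4.4]{borel} (the Jordan decomposition in linear algebraic groups) and moves on. Your Lie-algebra argument is the standard way of extracting the one-parameter version from that reference, and you correctly isolate the only point that genuinely needs justification, namely that the elliptic, hyperbolic and nilpotent parts of the generator land in $\Lie(G)$ and not merely in $\mathfrak{sl}_k(\RR)$ (note that ``$X_h,X_e,X_n$ are polynomials in $X$'' alone does not give this, since $\Lie(G)$ is not closed under polynomial operations; one really does need that $\Lie(G)$ is the Lie algebra of a real algebraic group, so that Jordan components of its elements stay inside). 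One small wording slip: to get uniqueness of the one-parameter subgroups themselves you should apply the uniqueness of the group-level real Jordan decomposition of $f_t$ for \emph{every} $t$, not for a single fixed $t$ --- a single element can lie on several one-parameter subgroups, but agreement for all $t$ pins down the parametrized families $K_F,A_F,U_F$.
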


 The subgroups $K_F, A_F$ and $ U_F$ are called compact, diagonal and unipotent parts of $F$, respectively.
 In \S\ref{sec;2} we will reduce the proof of Theorem \ref{t-main} to its  following special case which contains the main
 unknown situations.
 \begin{theorem}\label{thm;general}
 	Let $ G \le \SL_k(\RR)$ be a connected center-free semisimple Lie group without compact factors.
 	Let
 	$F=\{f_t: t\in  \RR\}$ be a one parameter subgroup of $G$
 	such that the compact part
 	$K_F$ is trivial but the diagonal part
 	$A_F$  is nontrivial.
 We assume the followings hold:
 \begin{itemize}
 	\item  $G=\prod_{i=1}^m G_i$ is a direct product of connected normal subgroups
 	$G_i$.
 	\item $\Gamma =\prod_{i=1}^m \Gamma_i$ where each $\Gamma_i$ is a nonuniform irreducible
 	lattice of $G_i$.
 	\item  The group  $A_F$ has nontrivial projection to each  $G_i$.
	 \end{itemize}
 		 Then
 		 the Hausdorff dimension of
 		$
 	 \D(F^+, G/\Gamma)
 		$
 		is strictly less than the manifold dimension of $G/\Gamma$.
\end{theorem}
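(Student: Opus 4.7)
The plan is to follow the height-function strategy pioneered by Eskin--Margulis--Mozes \cite{emm98} and developed by Kadyrov--Kleinbock--Lindenstrauss--Margulis \cite{KKLM}. Concretely, the goal is to produce a proper function $\psi: \ggm \to [1,\infty)$, a relatively compact neighborhood $B\subset G$ of the identity, and constants $t_0>0$, $c\in(0,1)$, $b>0$ so that the contraction-on-average inequality
\[
\int_B \psi(f_{t_0} g x)\dd m_B(g) \le c\,\psi(x)+b \qquad (\forall\, x\in \ggm)
\]
holds, where $m_B$ is the normalized Haar measure on $B$. Iteration then yields
\[
\int_{B^N}\psi(f_{Nt_0}g_N\cdots g_1 x)\,\prod_j \dd m_B(g_j)\le c^N\psi(x)+\tfrac{b}{1-c},
\]
and the KKLM covering argument converts this bound into a strict upper bound $\dim_H \D(F^+,\ggm)\le \dim \ggm-\delta$ for an effective $\delta>0$.

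To construct $\psi$, I would exploit the product structure. For each factor $G_i$ and the nonuniform irreducible lattice $\Gamma_i\subset G_i$, Borel--Harish-Chandra reduction theory attaches to every $\Gamma_i$-conjugacy class of proper $\QQ$-parabolic subgroup $P$ of $G_i$ a highest-weight-norm type function $\varphi_P$ that grows exponentially as one moves into the cusp of $P$. Put $\psi_i = \max_P \varphi_P^{\alpha_P}$ for suitably chosen exponents $\alpha_P>0$; this is a proper function on $G_i/\Gamma_i$ analogous to Minkowski's successive-minima height used in \cite{KKLM}. Define $\psi = \max_i \psi_i\circ \pi_i$ where $\pi_i$ is the projection to the $i$-th factor. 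Verifying the contraction inequality is the heart of the argument. By Lemma \ref{l-RJD} we have $f_t=a_tu_t$ (since $K_F$ is trivial), and by hypothesis the diagonalizable part $a_t^{(i)}\df\pi_i(a_t)$ is nontrivial on each $G_i$. For each cusp $P$ in $G_i$, the unipotent radical $N_P$ decomposes under $\Ad(a_t^{(i)})$ into contracting, expanding, and neutral weight subspaces; choosing $B_i\subset G_i$ to average along the expanding horospherical direction gives a Margulis-type contraction for the contribution of $P$ to $\psi_i$, while the neutral directions are handled using that $a_t^{(i)}$ acts nontrivially on some fundamental character of $P$ (otherwise $A_F$ would project trivially to $G_i$) together with the commutativity of $a_t^{(i)}$ and $u_t^{(i)}$. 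Summing over cusps and then over factors delivers the desired inequality.

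The dimension bound follows by a now-standard Chebyshev-plus-covering argument: if $x\in\D(F^+,\ggm)$, then along the orbit segment $\{f_{nt_0}g_n\cdots g_1 x: 0\le n\le N\}$ the function $\psi$ must stay large for a positive proportion of $n$, yet the iterated contraction inequality forces an exponentially small $m_B^N$-measure of such tuples $(g_1,\ldots,g_N)$. Covering $\D(F^+,\ggm)$ at scale $\sim e^{-Nt_0}$ by suitable $B^N$-translates yields a covering number growing strictly slower than $e^{Nt_0 \dim\ggm}$, whence the strict dimension gap. The main obstacle is the contraction inequality itself: the \cite{KKLM} proof for $\SL_{m+n}(\ZZ)$ leans on explicit Minkowski successive minima and the clean geometry of flags, whereas here one must work with general Tits reduction theory and handle the \emph{mixed} case in which $\Ad(a_t^{(i)})$ has weights of both signs on $N_P$. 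A further subtlety is that $F$ need not be $\Ad$-diagonalizable, so $u_t$ contributes polynomial growth to $\psi(f_tx)$ that must be dominated by the exponential decay supplied by $a_t$; this forces a careful tuning of $t_0$ and of the exponents $\alpha_P$.
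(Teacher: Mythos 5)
Your proposal identifies the right broad strategy — an Eskin--Margulis style height function built from the parabolic cusps of each factor, a contraction-on-average inequality, and a Chebyshev/covering argument \`a la Kadyrov--Kleinbock--Lindenstrauss--Margulis — and this is indeed the overall framework the paper uses. However, you have glossed over exactly the step that makes the nontrivial-$U_F$ case hard, and as written your argument does not close.

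The gap is in the covering step. Once you restrict the averaging to the expanding horospherical subgroup $G^+$ of $a_1$ (which you correctly say one should do), the contraction inequality controls translations in the $G^+$-direction only. To bound $\dim_H\D(F^+,\ggm)$ you must cover a full neighborhood of a point $x$, which locally decomposes as $B^-_1\,B^c_{1/2}\,B^+_{1/2}$ where $G^c$ is the centralizer of $a_1$ and $G^-$ is the contracting horosphere. The $G^-$-direction costs nothing (divergence on average is preserved by $G^-$-translation, Lemma \ref{lem;stable}). But in the $G^c$-direction, when you write $f_{nt}\,zh\,x = z\,(z^{-1}f_{nt}z)\,h\,x$ for $z\in B^c_{1/2}$, the one-parameter group you are effectively averaging against is the conjugate $F_z = z^{-1}Fz$, not $F$ itself. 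When $U_F\neq\{1\}$, conjugation by $z\in G^c$ genuinely changes the unipotent part of $F$, and the height-function contraction estimate for $F$ alone does not transfer. What the paper establishes in \S\ref{sec;3}--\S\ref{sec;height} is a \emph{uniform} contraction property for the whole family $\{F_z : z\in B^c_1\}$ (Lemmas \ref{l-v+-}, \ref{l-contraction}, \ref{lem;crutial}, \ref{lem;key}), which is precisely what feeds into the Besicovitch covering of the $G^c$-direction in Lemma \ref{l-main}. Your proposal never mentions this family, nor does it explain how to bound the covering number in the centralizer direction — which is the point at which the $\Ad$-diagonalizable proofs break down.

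A secondary vagueness: you say ``cover $\D(F^+,\ggm)$ at scale $\sim e^{-Nt_0}$ by suitable $B^N$-translates'' without specifying the scales in the three directions. The paper's count is $e^{(d^c\lambda+d\lambda-\alpha+\delta(\cdot))tN}$ balls of radius $e^{-(\lambda+\delta)tN}$ inside $G^cG^+$, with the $G^-$-direction left untouched; getting the dimension drop $\alpha_1/\lambda$ out of this requires matching the exponential in the covering number against the $(d^c+d)$-dimensional count at the chosen scale. If one naively covers the whole $\dim G$-dimensional neighborhood at a single scale, the bookkeeping does not produce a strict gap. You would need to fill in both the uniform contraction for $F_z$ and the anisotropic covering to turn the sketch into a proof.
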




 The proof of Theorem \ref{thm;general} is  from \S \ref{sec;3}   till the end of the paper.
 Indeed, the upper bound of the Hausdorff dimension in the setting of Theorem \ref{thm;general} can  be explicitly calculated and we will make this point clear during the proof.

 Our main tool will be the  Eskin-Margulis height function (abbreviated as EM height function) introduced in \cite{em}. If $F$ is diagonalizable, i.e.~$F=A_F$, Theorem \ref{t-main} can be established using the strategy developed in \cite{KKLM} and the contraction property of the
 proved in \cite{s}.
 But when $F$  has nontrivial  unipotent parts, i.e.~$U_F$ is nontrivial, essential new ideas are needed. The following example contains the main difficulties we need to handle  in the proof of Theorem \ref{thm;general}:  $ G=\SL_4(\RR)\times \SL_4(\RR), \Gamma =\SL_4(\ZZ[\sqrt 2]) $ which embeds in $G$ diagonally via Galois conjugates, and
 \[
f_t =
 \left(
 \begin{array}{cccc}
 e^t & 0 & 0 & 0\\
 0 & e^{-t} & 0  & 0\\
 0 & 0 &  1 & t \\
 0 & 0 & 0 & 1
 \end{array}
 \right)\times
 \left(
 \begin{array}{cccc}
 1 & t & 0 & 0\\
 0 & 1 & 0  & 0\\
 0 & 0 &  1 & 0 \\
 0 & 0 & 0 & 1
 \end{array}
 \right).
 \]
 There are two main difficulties.  One is caused by  the unipotent part of $f_t$ in the first
 $\SL_4(\RR)$ factor, and the other is caused by the unipotent part of $f_t$ in the second
 $\SL_4(\RR)$ factor. To overcome these difficulties, we will
prove a uniform  contraction property for a family of one parameter subgroups in \S \ref{sec;3} and \S \ref{sec;height} with respect to the EM height function. Then the last two sections are devoted to the proof of Theorem \ref{thm;general}.

\section{Proof of Theorem \ref{t-main}}\label{sec;2}
In this section we prove Theorems  \ref{t-main} assuming Theorem   \ref{thm;general}. Let $G, \Gamma, F$ be as in Theorem \ref{t-main}. We choose and fix a Euclidean norm $\|\cdot\|$  on the Lie algebra $\Lg$ of $G$, which induces a right invariant
Riemannian metric $\dist(\cdot, \cdot)$ on $G$. Moreover, this metric naturally induce a metric on $G/\Gamma$, also denoted by  $``\dist"$, as follows:
\[
\dist(g\Gamma, h\Gamma )=\inf_{\gamma\in \Gamma}\dist(g\gamma, h)\quad \mbox{where }g, h\in G .
\]

 Let  $\Lr$ be the maximal amenable ideal  of the Lie algebra $\Lg$ of $G$, i.e.~the largest ideal whose analytic subgroup is amenable.
 The adjoint action of $G$ on $\mathfrak s=\Lg/\Lr$ defines  a homomorphism $\pi:G\mapsto \mathrm{Aut}(\mathfrak s)$.
 Let $S$ be  the connected component of $\mathrm{Aut}(\mathfrak s)$.
 It follows from the Levi decomposition of $G$ that   $\pi(G)=S$ and $S$ is a
 center-free  semisimple Lie group  without compact factors.
  It is known  that $\Gamma\cap \ker(\pi)$ is a cocompact lattice in $\ker(\pi)$ and $\pi(\Gamma)$ is a lattice in $S$, see e.g.~\cite[Lemma 6.1]{BQ}. Therefore,
 {the induced  map $\overline {\pi}: G/\Gamma \mapsto S/\pi(\Gamma)$ is proper}, and
 consequently
 \begin{align}\label{eq;equal}
 	\overline{\pi}(\D(F^+, G/\Gamma))=\D(\pi(F^+), S/\pi(\Gamma)).
 \end{align}

 Let $\varphi: \mathfrak s\to \mathfrak g$ be an embedding of Lie algebras such that $\mathrm{d}\pi\circ \varphi$ is the identity map.
 It follows from (\ref{eq;equal}) that  for any $x\in G/\Gamma$ and any $v\in  \mathfrak s$
 \[
 \exp(v)\overline{\pi}(x)\in \D(\pi(F^+), S/\pi(\Gamma))
 \]
 if and only if
 $$\exp(\varphi(v))\exp (v')x\in \D(F^+, G/\Gamma) \mbox{\quad for all }v'\in \Lr.$$
 By Marstrand's product theorem, the Hausdorff dimension of the product   of two sets of Euclidean spaces
 is bounded from above by the sum of the   Hausdorff dimension of one set  and the packing dimension of the other, e.g.~\cite[Theorem 3.2.1]{bp}. So to prove Theorem \ref{t-main} it suffices to give a nontrivial  upper bound of the Hausdorff dimension of $\D(\pi(F^+), S/\pi(\Gamma)) $.


 We summarize what we have obtained as follows.
 \begin{lemma}
 	\label{prop;more}
 	Theorem  \ref{t-main} is  equivalent to its special case where
 	$G$ is a  center-free  semisimple Lie group without compact factors.
 \end{lemma}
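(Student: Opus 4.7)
The reverse implication is trivial; my plan is to derive the full Theorem~\ref{t-main} from its announced special case. Assume the special case holds and retain the notation $\mathfrak{r}, \mathfrak{s}, \pi, S, \varphi$ set up in the preceding paragraphs. My approach combines three ingredients already assembled there.

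First, identity~\eqref{eq;equal} and the equivalence stated just below it identify $\D(F^+,G/\Gamma)$ with the full $\overline{\pi}$-saturation of $\D(\pi(F^+), S/\pi(\Gamma))$: divergence on average for the $F^+$-trajectory of $x$ depends only on $\overline{\pi}(x)$, as is transparent from properness of $\overline{\pi}$, since compact sets pull back to compact sets and time-averaged visits match up between the two spaces. Second, cocompactness of $\Gamma\cap\ker(\pi)$ in $\ker(\pi)$ ensures that $\overline{\pi}$ has compact fibers of dimension $\dim\mathfrak{r} = \dim G - \dim S$. Third, via the splitting $\mathfrak{g} = \varphi(\mathfrak{s}) \oplus \mathfrak{r}$, the map $(v,v')\mapsto \exp(\varphi(v))\exp(v')x$ is a local bi-Lipschitz chart near each $x\in G/\Gamma$ in which $\D(F^+, G/\Gamma)$ takes the product form $E\times B$, where $E$ corresponds to a piece of $\D(\pi(F^+), S/\pi(\Gamma))$ and $B$ is a small ball in $\mathfrak{r}$.

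Covering $G/\Gamma$ by countably many such charts and applying Marstrand's product theorem \cite[Theorem~3.2.1]{bp} yields
\[
\dim_H \D(F^+, G/\Gamma) \;\le\; \dim_H \D(\pi(F^+), S/\pi(\Gamma)) + \dim\mathfrak{r}.
\]
The triple $(S, \pi(\Gamma), \pi(F))$ satisfies the hypotheses of the special case (in particular, $\pi(\Gamma)$ is a lattice in $S$ by the cited \cite[Lemma~6.1]{BQ}), so the special case delivers the strict inequality $\dim_H \D(\pi(F^+), S/\pi(\Gamma)) < \dim S$. Adding $\dim\mathfrak{r}$ to both sides gives $\dim_H \D(F^+, G/\Gamma) < \dim G$, completing the reduction.

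The main step requiring care is the production of the local product chart together with the verification that the divergent-on-average set factors genuinely as $E\times B$ rather than as a nontrivial fibration over $E$; this is precisely the content of the saturation statement in the previous paragraph, and it is what makes Marstrand available in a form that is strong enough to preserve a strict inequality on Hausdorff dimension.
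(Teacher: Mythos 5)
Your proposal is correct and follows essentially the same route as the paper: the paper proves Lemma~\ref{prop;more} precisely by the saturation identity $\D(F^+,G/\Gamma)=\overline{\pi}^{-1}(\D(\pi(F^+),S/\pi(\Gamma)))$ coming from properness of $\overline{\pi}$, the local product chart via $(v,v')\mapsto\exp(\varphi(v))\exp(v')x$ using the splitting $\Lg=\varphi(\mathfrak s)\oplus\Lr$, and Marstrand's product theorem to pass the dimension bound from $S/\pi(\Gamma)$ to $G/\Gamma$, all of which is exactly what you present.
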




\begin{proof}[Proof of Theorem \ref{t-main}]
According to Lemma \ref{prop;more}, it suffices to prove the theorem under the
additional assumption that
 $G$ is a center-free semisimple Lie group  without compact factors.
 Under this assumption, the adjoint representation $\Ad: G\to \SL(\Lg) $ is a closed embedding.
  According to the real Jordan decomposition in
 Lemma \ref{l-RJD}, the  compact part  $K_{F}$ does not affect the divergence on average  property of the trajectories.
 So we
 assume without loss of generality that
 $K_{F}$  is trivial.

  There exist finitely many connected  semisimple subgroups $G_i$
such that $G= \prod_i G_i$ and $\Gamma_i \df \Gamma \cap G_i$  is an irreducible lattice of $G_i$ for each $i$.
It follows that  $\prod_i \Gamma_i$ is a finite index subgroup of $\Gamma$ and the natural quotient  map  $G/\prod_i \Gamma_i \rightarrow \ggm$ is proper.
So we  assume  moreover   that $\Gamma= \prod_i \Gamma_i$.

Denote by $\pi_j $  the projection of $G$  to $G/ G_j=\prod_{i\neq j}G_i$ and
denote by $\overline{\pi}_j$ the induced map from  $G/\Gamma$ to
$\pi_j(G)/\pi_j(\Gamma)=\prod_{i\neq j} G_i/\Gamma_i$. Here if $G=G_j$ we interpret $\prod_{i\neq j}G_i$ as a trivial group and $\prod_{i\neq j} G_i/\Gamma_i$ as a singe point set.
If $G_j/\Gamma_j$ is compact or the projection of  $A_F$ to $G_j$ contains only the neutral element, then
 $$\D(F^+, G/\Gamma) =   \overline{\pi}_j^{-1}\Big(\D\big(\pi_j(F^+), \prod_{i\neq j}G_i/\Gamma_i\big)\Big).
$$
So either $\D(F^+, G/\Gamma)$ is an empty set or we finally can reduce the problem to the setting of Theorem \ref{thm;general} where
each  $\Gamma_i$ is a nonuniform lattice and the projection of $A_F$
to each $G_i $ is nontrivial. This completes the proof.

\end{proof}




\section{Preliminary on linear representations}\label{sec;3}
From this section, we start the proof of Theorem \ref{thm;general}. At the beginning of each section we will set up some notation that will be used later.
Let    $G$ and $ F$ be as in Theorem \ref{thm;general}.
  Let $A_F=\{a_t:t\in \RR \}$ and $U_F=\{u_t: t\in \RR \}$ be the diagonal and unipotent parts of $F$.   Let $H$ be the unique connected normal subgroup of $G$ such that
  $A_F\le H $ and the projection of $A_F$ to each simple factor of $H$ is nontrivial.
  Since $A_F$ is nontrivial and $G$ is center-free, $H$ is (nontrivial) product of some simple factors of $G$.
  Hence $H$ is a semisimple Lie group without compact factors. Let $S$ be the product of simple factors of $G$ not contained in $H$. Then $S$ is also semisimple normal subgroup of $G$ that commutes with $H$. Moreover, $G=HS$ and $H\cap S=1_G$, where $1_G$ is the neutral element of $G$.

In this section we prove  a couple of auxiliary results for a finite dimensional linear representation $\rho: G\rightarrow \GL(V)$
on a  (nonzero real) normed vector space $V$. These results will be used in the next section to prove the uniform contracting property of EM height function. We will use $\|\cdot\|$ to denote the norm on $V$.

 For $\lambda\in \RR$,
 we  denote the $\lambda$-Lyapunov subspace of $A_F$ by
 $$V^{\lambda}=\{v\in V: \rho (a_t) v=e^{\lambda t} v \}.$$
 Recall that if $V^\lambda\neq \{0 \}$, then $\lambda$ is a called an Lyapunov exponent
 of  $(\rho, V)$.
 Since $U_F$ commutes with $A_F$, every Lyapunov  subspace $V^{\lambda}$ is $U_F$-invariant.
As $A_F$ is $\RR$-diagonalizable,
the space  $V$ can be decomposed as $V^+\oplus V^0\oplus V^-$ where
$$V^+=\oplus_{\lambda>0}V^{\lambda}\quad\mbox{and }\quad  V^-=\oplus_{\lambda <0}V^{\lambda}.$$

Now we consider  the adjoint representation of $G$ on the  Lie algebra $\Lh$ of $G$.
It is easily checked that
$\Lh^+,  \Lh^-$ and $ \Lh^c\df \Lh^0$ are subalgebras of $\Lh$.
The connected subgroup  $G^+$ (resp.~$ G^-$)   with Lie algebras
$\Lh^+$ (resp.~$\Lh^-$)  is called unstable (resp.~stable) horospherical subgroup of $a_1$.
We denote    the connected component of the centralizer of $a_1$ in $G$ by $G^c$ whose  Lie algebra  is $\Lh^c$.  Let $d, d^c, d^-$ be the manifold dimensions of $G^+$,
 $G^c$ and $G^-$,  respectively.
It follows from the   nontriviality of $A_F$ that $d>0$.

For $r\ge 0$ we let $B_r^{G}=\{h\in G: \dist (h, 1_G)<r  \}$, $B_r^{\pm }=\{h\in G^{\pm}: \dist (h, 1_G)<r  \}$ and  $B_r^c=\{h\in G^c: \dist (h, 1_G)<r  \}$.
By rescaling the Riemannian metric if necessary, we may assume  that:
\begin{enumerate}
  \item the product map $B_1^-\times B_1^c\times B_1^+\to G$ is a diffeomorhism onto its image,
  \item and the logarithm map is well-defined on $B_1^{G}$ and is a diffeomorphism onto its image.
\end{enumerate}
 According to (1), it is safe to identity
the product $B_1^-\times B_1^c\times B_1^+$ with $B_1^-B_1^cB_1^+$ and we will mainly  use the later notation for sake of convenience.  The same statement as (2) also holds for $B_1^{\pm }$ and $B_1^c$.

 We fix a Haar measure  $\mu$   on $G^+$ normalized with $\mu(B_1^+)=1$.
 Since the metric $``\dist"$ is right invariant, any open ball of radius $r$ in $G^+$ has the form
 $B_r^+ h$ $(h\in G^+)$ and there exits $C_0\ge 1$ such that
 \begin{align}\label{eq;c+}
 C_0^{-1}r^d\le	\mu( B_r^+ h)=\mu{(B_r^+)}\le C_0 r^d\quad \mbox{for all } 0\le r\le 1  .
 \end{align}
  For $g, h\in G$ we let $g^h= h^{-1}g h$.  For  $z\in G^c$,
 let
 $F  _z  = \{f_t  ^z  : t\in \RR \}$ and $F  _z  ^+= \{f_t  ^z  : t \ge 0 \}$.
Note that
 $f_t  ^z  = a_t u_t  ^z  $
  and
 \begin{align}\label{eq;compact}
 \{ u_1  ^z  : z\in B_1^c \} \quad \mbox{is relatively compact. }
 \end{align}


\begin{lemma}\label{l-v+-}
	Let $\rho: G\to \GL(V)$ be a representation on a finite dimensional normed vector space $V$.
Let $\lambda$ be a Lyapunov exponent of $(\rho, V)$.
	For any $0<\delta<1$, there exists $T_\delta >0$ such that, for all $t\ge T_\delta, z\in B_1^c$ and unit vector  $v\in V^{\lambda}$ we have
	\begin{equation}\label{ine-v+-}
	 e^{(1-\delta)\lambda t}\le \|\rho(f_{t}  ^z  )v\|
 \le e^{(1+\delta)\lambda t}.
	\end{equation}
\end{lemma}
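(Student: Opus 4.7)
The plan is to exploit the decomposition $f_t^z = a_t u_t^z$ together with the unipotence of $\rho(u_t^z)$, and to trade the polynomial corrections coming from the unipotent part against the exponential scaling of $\rho(a_t)$ on $V^\lambda$. Since $z \in G^c$ centralizes $A_F$, we have $a_t^z = a_t$ and hence $f_t^z = a_t u_t^z$, with $u_t^z$ still commuting with $a_s$ for every $s$. Consequently $V^\lambda$ is invariant under $\rho(u_t^z)$, and for any $v \in V^\lambda$,
\[
\rho(f_t^z)v \;=\; e^{\lambda t}\,\rho(u_t^z)v.
\]
It therefore suffices to show that, uniformly in $z \in B_1^c$ and unit $v \in V^\lambda$, the quantity $\|\rho(u_t^z)v\|$ is sandwiched between the reciprocal of a polynomial in $t$ and a polynomial in $t$.

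For that, I would use the fact that, since $G$ is semisimple and linear and $U_F$ is $\Ad$-unipotent, $\rho(u_t)$ is unipotent; setting $X = \log u_1$,
\[
\rho(u_t^z) \;=\; \exp\!\bigl(t\,d\rho(\Ad(z^{-1})X)\bigr),
\]
and $d\rho(\Ad(z^{-1})X) = \rho(z)^{-1} d\rho(X)\rho(z)$ is nilpotent of index at most $N := \dim V$. Thus $\rho(u_t^z)$ is a matrix polynomial of degree at most $N-1$ in $t$ whose coefficients depend continuously on $z$. Since $B_1^c$ is relatively compact, so is $\{d\rho(\Ad(z^{-1})X): z \in B_1^c\}$ in $\mathrm{End}(V)$, and there exists $C \ge 1$, depending only on $\rho$, with
\[
\|\rho(u_t^z)v\| \;\le\; C\,(1+|t|)^{N-1}
\]
for every $t \in \RR$, every $z \in B_1^c$, and every unit vector $v \in V$. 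Applying the same bound to $\rho(u_{-t}^z) = \rho(u_t^z)^{-1}$ and writing $v = \rho(u_{-t}^z)\rho(u_t^z)v$ yields the matching lower bound $\|\rho(u_t^z)v\| \ge C^{-1}(1+|t|)^{-(N-1)}$.

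Combining these estimates with the identity from the first step gives
\[
C^{-1}\,e^{\lambda t}(1+t)^{-(N-1)} \;\le\; \|\rho(f_t^z)v\| \;\le\; C\,e^{\lambda t}(1+t)^{N-1}
\]
for all $t \ge 0$, $z \in B_1^c$ and unit $v \in V^\lambda$. Since $\lambda \neq 0$, one can choose $T_\delta$ so large that $(1+t)^{N-1} \le e^{\delta |\lambda| t}$ and $(1+t)^{-(N-1)} \ge e^{-\delta |\lambda| t}$ for all $t \ge T_\delta$; the two-sided bound in the lemma then follows. No step is really the main obstacle here; the only mild point is the uniformity in $z$, which is secured by the relative compactness in \eqref{eq;compact} (equivalently, by the boundedness of $B_1^c$ and continuity of $\Ad$), together with the basic fact that nilpotency is preserved under conjugation so the exponent $N-1$ on the polynomial does not depend on $z$.
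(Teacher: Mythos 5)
Your argument is correct and follows essentially the same route as the paper's proof: both write $f_t^z = a_t u_t^z$, note that $u_t^z$ preserves $V^\lambda$ so that $\|\rho(f_t^z)v\| = e^{\lambda t}\|\rho(u_t^z)v\|$, bound $\|\rho(u_{\pm t}^z)\|$ by a fixed polynomial in $t$ uniformly over $z \in B_1^c$ using the relative compactness of $\{u_1^z : z \in B_1^c\}$, and then absorb the polynomial factor into $e^{\pm\delta\lambda t}$ once $t$ is large. You merely make explicit the nilpotency and degree count behind the polynomial bound, which the paper leaves implicit.
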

\begin{proof}
	 	 For all $v\in V^\lambda$ with $ \|v\|=1$ we have
$	\|\rho(a_t)v\|=  e^{\lambda t}$.
	On the other hand, in view of (\ref{eq;compact}), there exists $C>0$
	and $n\in \NN$\footnote{Here  $\NN=\{1, 2, 3, \ldots \}$.} such that
	\[
	\|\rho (u_{ t}  ^z  ) \|\le C (|t|+1)^n
	\]
	for all $z \in B^c_1$ and $t\in \RR$.
	Therefore, for any unit vector $v\in V^\lambda, z\in B_1^c$ and sufficiently large  $t$,
	\begin{align*}
	\|\rho(f_{ t}  ^z  )v\|&\ge  \|\rho(u_{ -t}  ^z  )\|^{-1} \|\rho(a_t)v\|\ge  C^{-1}(|t|+1)^{-n}e^{\lambda t}\ge e^{(1-\delta)\lambda t}, \\
	\|\rho(f_{ t}  ^z  )v\|&\le  \|\rho(u_{ t}  ^z  )\| \|\rho(a_t)v\|\le  C(|t|+1)^ne^{\lambda t}\le e^{(1+\delta)\lambda t}.
	\end{align*}

\end{proof}

From now on till the end of this section, we assume that
$\rho: G\to \GL(V)$ is a representation on a finite dimensional normed vector space $V$ which has no nonzero $H$-invariant vectors. As any two norms on $V$ are equivalent, we also assume  that the norm is Euclidean without loss of generality.

Recall that a nonzero $H$-invariant subspace $V'$ of $V$  is said to be $H$-irreducible if
$V'$ contains no $H$-invariant subspaces  besides $\{0 \}$ and itself.
The complete reducibility of representations of  $H$  implies that
there exists a unique  decomposition (called $H$-isotropic decomposition)
\begin{align}\label{eq;complicate}
V=V_1 \oplus \cdots \oplus V_m
\end{align}
such that  irreducible  sub-representations of  $H$ in
the same  $V_i$ are isomorphic but irreducible  sub-representations in different $V_i$ are non-isomorphic.  Since $S$ commutes with $H$, each $V_i$ is $S$-invariant, and hence $G $-invariant.
Each $V_i$ is called an $H$-isotropic subspace of $V$.

Let $\lambda_i$ be the top Lyapunov exponent of $A_F$ in $(\rho, V_i)$, i.e.,
\begin{align*}
\lambda_i= \max\{ \lambda\in \mathbb R:  V_i^{\lambda}\neq \{ 0\} \}.
\end{align*}
Since  the projection of $A_F$ to each simple factor of $H$ is nontrivial, every
$\lambda_i$ is positive.
Let   $\lambda$ be  the minimum of top Lyapunov exponents in each $V_i$, i.e.
\begin{align}
\label{eq;top}
\lambda= \min\{ \lambda_i: 1\le i\le m \}>0.
\end{align}
Let $\pi_i:
V_i\to V^{\lambda_i}_i$ be the $A_F$-equivariant projection.

\begin{lemma}\label{lem;sharp}
For all $v\in V_i\setminus \{0\}$,
	 the map
	 \begin{align}\label{eq;varphi}
	 \varphi_v: G^+ \mapsto \RR \quad \mbox{where}\quad  \varphi_v(h)=\|\pi_i(\rho(h)v)\|^2
	 \end{align}
	 is not  identically  zero.
\end{lemma}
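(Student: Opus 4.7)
The plan is to argue by contradiction. Suppose that $\pi_i(\rho(h)v)=0$ for every $h\in G^+$; I will show that the cyclic $G$-submodule $M\df\mathrm{span}\{\rho(g)v:g\in G\}$ lies inside $\ker\pi_i$, which will contradict the $H$-isotypic structure of $V_i$.

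The first step is to record how $\pi_i$ interacts with $\rho(G^c)$ and $\rho(G^-)$. Because $G^c$ is the identity component of the centraliser of $a_1$, it commutes with $A_F$, so $V_i^{\lambda_i}$ is $\rho(G^c)$-invariant and $\pi_i\circ\rho(h^c)=\rho(h^c)\circ\pi_i$ on $V_i$ for every $h^c\in G^c$. For $G^-$ I would use that every $A_F$-weight occurring on $\Lh^-$ is strictly negative, so $d\rho(\Lh^-)$ shifts $A_F$-weights on $V_i$ strictly downwards; expanding $\rho(\exp X)=I+d\rho(X)+\tfrac12 d\rho(X)^2+\cdots$ for $X\in\Lh^-$ then shows that $\rho(h^-)$ maps each $w\in V_i^{\lambda_i}$ to $w$ plus a vector in $\bigoplus_{\mu<\lambda_i}V_i^\mu$, and hence $\pi_i\circ\rho(h^-)=\pi_i$ on all of $V_i$. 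Since $G^-$ is connected, this identity extends from $\exp(\Lh^-)$ to all of $G^-$ by multiplicativity.

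Combining the two observations, for $g=h^-h^ch^+\in B_1^-B_1^cB_1^+$ one has
\[
\pi_i(\rho(g)v)=\pi_i\bigl(\rho(h^-)\rho(h^c)\rho(h^+)v\bigr)=\pi_i\bigl(\rho(h^c)\rho(h^+)v\bigr)=\rho(h^c)\,\pi_i\bigl(\rho(h^+)v\bigr)=0.
\]
Since $B_1^-B_1^cB_1^+$ is an open neighbourhood of $1_G$ in $G$ and the map $g\mapsto\pi_i(\rho(g)v)$ is real-analytic, connectedness of $G$ forces $\pi_i(\rho(g)v)=0$ for every $g\in G$. In particular $M\subseteq\ker\pi_i=\bigoplus_{\mu<\lambda_i}V_i^\mu$.

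The contradiction comes from the $H$-isotypic structure of $V_i$. By complete reducibility, the nonzero $H$-submodule $M$ decomposes into irreducible $H$-summands, each isomorphic to the unique irreducible $H$-type appearing in $V_i$; each such summand therefore has top $A_F$-weight equal to $\lambda_i$, so $M\cap V_i^{\lambda_i}\neq\{0\}$, contradicting $M\subseteq\ker\pi_i$. The main technical step I expect to be the weight-shift computation that establishes $\pi_i\circ\rho(h^-)=\pi_i$; once that and the identification of $\lambda_i$ as the top $A_F$-weight of the irreducible $H$-type in $V_i$ are in place, analytic continuation from the open cell $B_1^-B_1^cB_1^+$ and the $H$-isotypic reduction are routine.
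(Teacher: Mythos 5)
Your proof is correct and follows essentially the same route as the paper: assume $\pi_i(\rho(G^+)v)=0$, use that $G^c$ commutes with $\pi_i$ and that $G^-$ only shifts $A_F$-weights downward to conclude $\pi_i(\rho(g)v)=0$ on the open cell $G^-G^cG^+$, extend to all of $G$ (the paper via density of the cell, you via real-analyticity — an equivalent step), and then contradict the fact that every nonzero $H$-invariant subspace of the isotypic component $V_i$ must meet the top weight space $V_i^{\lambda_i}$. No gap; the weight-shift computation you flag as the technical heart is exactly what makes $\rho(G^-G^c)$ preserve the complement $V_i'$ of $V_i^{\lambda_i}$, which is the paper's formulation of the same fact.
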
	
\begin{proof}
	Suppose $\varphi_v$ is identically zero.
	Then $\rho(G^+) v\subset V'_i$ where
	 $V'_i\subset V_i$ is the $A_F$-invariant complimentary subspace of $V^{\lambda_i}_i$.
	 This implies that $\rho(G^-G^cG^+) v\subset V'_i$. Since $G^-G^cG^+$ contains an
	 open dense  subset of $G$, see e.g.~\cite[Proposition 2.7]{mt94}, we moreover have that $\rho(G)v\subset V'_i$. This is impossible since
	 the intersection of   $V^{\lambda_i}_i$ with each $H$-invariant subspace of $V_i$ is  nonzero.
	 This contradiction completes the proof.
\end{proof}

\begin{lemma}\label{l-c-alpha}
	For all  $v\in V_i\setminus \{0\}$ and $r\ge 0$, let
	$$E(v,r)=\{h\in B_1^+ : \|\pi_i(\rho(h)v)\|\le r \}.$$
	Then there exists $\theta_i>0$ such that
	\begin{equation}\label{constant-c-alpha}
	C_i\df \sup_{\|v\|=1,v\in V_i}r^{-\theta_i}\mu(E(v,r))<\infty.
	\end{equation}
	In particular,
$	\mu(E(v,0))=0$.
\end{lemma}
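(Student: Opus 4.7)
The plan is to realize $\varphi_v$ as a polynomial function on $\mathfrak{g}^+$ via the exponential chart and then invoke the standard $(C,\alpha)$-good property of polynomials, combined with a compactness argument to get uniformity in $v$.

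First, since $G^+$ is the unstable horospherical subgroup of the $\mathbb{R}$-diagonalizable element $a_1$, it is unipotent, so the exponential map $\exp:\mathfrak{g}^+\to G^+$ is a diffeomorphism and the matrix coefficients of $\rho\circ\exp$ are polynomials on $\mathfrak{g}^+$ of degree bounded by some $D$ depending only on $\dim V$. Pulling back via this chart, the Haar measure $\mu$ is comparable (with a bounded Radon--Nikodym derivative on the relatively compact set $\log B_1^+$) to Lebesgue measure $\ell$ on $\mathfrak{g}^+$, and for each $v\in V_i$ the function
\[
\tilde\varphi_v(X)\df \varphi_v(\exp X)=\|\pi_i(\rho(\exp X)v)\|^2
\]
is a polynomial on $\mathfrak{g}^+$ of degree at most $2D$, whose coefficients are linear in the entries of $v\otimes v$ and hence depend continuously on $v$.

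Next I would apply the Kleinbock--Margulis $(C,\alpha)$-good property of polynomials: there exist constants $C'=C'(D)$ and $\alpha=\alpha(D)>0$ such that for every nonzero polynomial $p$ on $\mathfrak{g}^+$ of degree $\le 2D$ and every $\varepsilon>0$,
\[
\ell\bigl(\{X\in \log B_1^+:\,|p(X)|\le \varepsilon\sup_{\log B_1^+}|p|\}\bigr)\le C'\,\varepsilon^{\alpha}\,\ell(\log B_1^+).
\]
Setting $M_v\df \sup_{h\in B_1^+}\varphi_v(h)$ and applying this with $p=\tilde\varphi_v$ and $\varepsilon=r^2/M_v$, I get
\[
\mu(E(v,r))\le C''\bigl(r^2/M_v\bigr)^{\alpha}
\]
for a constant $C''$ independent of $v$, provided $M_v>0$.

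The remaining ingredient is a uniform lower bound on $M_v$ for unit $v\in V_i$. By Lemma \ref{lem;sharp}, $\varphi_v$ is not identically zero for any $v\in V_i\setminus\{0\}$, so $M_v>0$. Since $v\mapsto M_v$ is continuous (being the supremum of a continuous family of continuous functions) and the unit sphere of $V_i$ is compact, I obtain $M\df \inf_{\|v\|=1,\,v\in V_i}M_v>0$, which yields \equ{constant-c-alpha} with $\theta_i\df 2\alpha$ and $C_i\df C''/M^{\alpha}$. Finally, $E(v,0)=\{h\in B_1^+:\tilde\varphi_v(\log h)=0\}$ is the zero set of a nonzero polynomial of degree $\le 2D$ in the chart, hence has Lebesgue measure zero, so $\mu(E(v,0))=0$.

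The main (minor) obstacle is bookkeeping: checking that the degree bound $2D$ is independent of $v$ and that the Radon--Nikodym derivative between $\mu$ and $\ell$ on $\log B_1^+$ is uniformly bounded above and below; both are routine given that $G^+$ is unipotent and $\log B_1^+$ is relatively compact. The conceptual content is entirely contained in Lemma \ref{lem;sharp} plus the polynomial $(C,\alpha)$-good property.
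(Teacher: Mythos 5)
Your proof is correct and follows essentially the same strategy as the paper's: identify $G^+$ with a Euclidean space where Haar matches Lebesgue (the paper cites Corwin--Greenleaf's polynomial isomorphism of affine varieties $\RR^d\to G^+$ rather than invoking a Radon--Nikodym comparison for $\exp$, but this is cosmetic since both realize $\varphi_v$ as a polynomial of uniformly bounded degree), then invoke the $(C,\alpha)$-good property from Bernik--Kleinbock--Margulis and a compactness argument on the unit sphere of $V_i$ to get uniformity in $v$, with $\theta_i=2\alpha$.
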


\begin{proof}
	
	Since $G^+$ is a unipotent group, it is simply connected and by \cite[Theorem 1.2.10 (a)]{cg} there is an isomorphism of affine varieties
	$\RR^{d}\to G^+$ such that the Lebesgue measure of $\RR^d$ corresponds to the Haar measure $\mu$.  During the proof, we will  identify the group $G^+$ with $\RR^{d}$ for convenience.

	By Lemma \ref{lem;sharp}, for every nonzero $v\in V_i$ the map $\varphi_v$ in (\ref{eq;varphi}) is a nonzero polynomial map. So $\varphi_v|_{B_1^+}$
	is nonzero.
	Note that the degrees of $\varphi_v$ ($v\in V_i$) are uniformly bounded from above.
	 Therefore,  the $(C, \alpha)$-good property of
	polynomials in \cite[\S 3]{bkm} implies that there exist positive constants $C$ and $ \alpha$ such that
	\begin{align}\label{eq;1}
	\mu(E(v, r))\le C \left (\frac{r^2}{ \sup_{h\in B_1^+}\varphi_v(h)}\right)^\alpha
	\end{align}
for all nonzero $v\in V_i$.
	Since the set of  unit vectors of
	$V_i$ is compact,
	\begin{align}\label{eq;2}
	\inf_{\|v\|=1, v\in V_i} \sup_{h\in B_1^+}\varphi_v(h)>0.
	\end{align}
	So (\ref{constant-c-alpha}) follows from (\ref{eq;1}) and (\ref{eq;2}) by taking
	$\theta_i=2\alpha$. 	
\end{proof}

\begin{remark}\label{rem-3}
 According to  \cite[Lemma 3.2]{bkm} we have
 $\alpha=\frac{1}{dl}$ where $d$ is the manifold
dimension of $G^+$ and $l$ is a uniform upper bound of the  degree of $\varphi_v \ (v\in V_i)$.  So the constant $\theta_i$ can be calculated explicitly.
\end{remark}

\begin{lemma}\label{l-contraction}
     Let
	 $\theta_0= \min_{ 1\le i\le m} \theta_i$ where $\theta_i>0$  so that   Lemma \ref{l-c-alpha} holds and let $\lambda$ be as in (\ref{eq;top}).   Then for any    $0<\delta<\theta < \theta_0$, there exists $T_{\theta, \delta}>0$ such that for all $t\ge T_{\theta,   \delta}$, $z\in B_1^c$ and $v\in V$ with $\|v\|=1$, we have
	\begin{equation}\label{ine-integration}
\int_{B_1^+} \|\rho(f_{t}  ^z  h)v\|^{- \theta }d\mu(h)\le e^{-(\theta-\delta) \lambda t}.
	\end{equation}
\end{lemma}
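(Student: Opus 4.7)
The plan is to decompose $v = \sum_{i=1}^m v_i$ according to the $H$-isotypic decomposition $V = V_1\oplus\cdots\oplus V_m$, pick the component $v_{i_0}$ of largest norm, and reduce the required integral inequality to (i) Lyapunov growth of $\rho(f_t^z)$ on the top $A_F$-weight subspace $V_{i_0}^{\lambda_{i_0}}$ and (ii) the $(C,\alpha)$-good non-concentration of the map $h\mapsto \|\pi_{i_0}(\rho(h)v_{i_0})\|$ on $B_1^+$ supplied by Lemma \ref{l-c-alpha}.

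Since $\|v\|=1$ and there are only $m$ isotypic components, equivalence of norms on $V$ gives a uniform $c_0>0$ so that some $\|v_{i_0}\|\ge c_0$. Using that the projections $V\to V_{i_0}$ and $\pi_{i_0}\colon V_{i_0}\to V_{i_0}^{\lambda_{i_0}}$ are bounded linear maps, there is a constant $C_1>0$ depending only on $V$ and the fixed norm with
\[
\|\rho(f_t^z h)v\| \ge C_1^{-1}\,\|\pi_{i_0}(\rho(f_t^z h)v_{i_0})\| \qquad (h\in G,\ t\in\RR,\ z\in B_1^c).
\]
The essential algebraic point is that $\pi_{i_0}$ commutes with $\rho(f_t^z)$: since $z\in G^c$ centralizes $A_F$, $\rho(z)$ preserves every Lyapunov subspace $V^\lambda$ and every $G$-invariant $V_i$; similarly $\rho(u_t)$ preserves each $V^\lambda$ (because $U_F$ commutes with $A_F$) and each $V_i$. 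Hence $\rho(f_t^z) = \rho(a_t)\,\rho(z^{-1}u_t z)$ stabilizes both $V_{i_0}^{\lambda_{i_0}}$ and its $A_F$-invariant complement in $V_{i_0}$, so $\pi_{i_0}\rho(f_t^z) = \rho(f_t^z)\pi_{i_0}$ on $V_{i_0}$. Applying Lemma \ref{l-v+-} to the unit direction of $\pi_{i_0}(\rho(h)v_{i_0}) \in V_{i_0}^{\lambda_{i_0}}\subset V^{\lambda_{i_0}}$ and rescaling by linearity yields, for an auxiliary $\delta'\in(0,1)$ and all $t\ge T_{\delta'}$,
\[
\|\rho(f_t^z h)v\| \ge C_1^{-1}\,e^{(1-\delta')\lambda_{i_0} t}\,\|\pi_{i_0}(\rho(h)v_{i_0})\|.
\]

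Raising to the power $-\theta$ and integrating over $B_1^+$, it remains to bound $\int_{B_1^+}\|\pi_{i_0}(\rho(h)v_{i_0})\|^{-\theta}d\mu(h)$ uniformly in $v$ and $z$. The layer-cake formula rewrites this integral as $\theta\int_0^\infty \mu(E(v_{i_0},r))\,r^{-\theta-1}dr$, with $E(v_{i_0},r)$ as in Lemma \ref{l-c-alpha}. Scaling gives $\mu(E(v_{i_0},r))\le C_{i_0}(r/\|v_{i_0}\|)^{\theta_{i_0}}\le C_{i_0}c_0^{-\theta_{i_0}}r^{\theta_{i_0}}$; combined with the trivial bound $\mu(E(v_{i_0},r))\le \mu(B_1^+)=1$ for $r\ge 1$ and the hypothesis $\theta<\theta_0\le \theta_{i_0}$, the integral converges to a finite constant $M_\theta$ independent of $v,z,t$. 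Using $\lambda_{i_0}\ge\lambda$,
\[
\int_{B_1^+}\|\rho(f_t^z h)v\|^{-\theta}d\mu(h) \le C_1^\theta\,M_\theta\,e^{-\theta(1-\delta')\lambda t}.
\]
Choosing $\delta':=\delta/(2\theta)$ so that $\theta(1-\delta')\ge \theta-\delta/2$, and then taking $T_{\theta,\delta}$ to be the maximum of $T_{\delta'}$ and a threshold after which $C_1^\theta M_\theta \le e^{(\delta/2)\lambda t}$, delivers the asserted bound $e^{-(\theta-\delta)\lambda t}$.

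The main obstacle is verifying the commutation $\pi_{i_0}\rho(f_t^z)=\rho(f_t^z)\pi_{i_0}$: this is what cleanly separates the exponential growth factor (handled by Lemma \ref{l-v+-}) from the $h$-dependence (controlled by Lemma \ref{l-c-alpha}), and it rests on tracking that each ingredient in $f_t^z=a_t\cdot z^{-1}u_t z$ respects both the isotypic decomposition and the $A_F$-Lyapunov decomposition, which holds because $z$ centralizes $A_F$ and $U_F$ commutes with $A_F$.
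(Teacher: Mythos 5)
Your proof is correct and follows essentially the same strategy as the paper's: split $v$ into isotypic components, pick the largest one, and separate the Lyapunov growth of $\rho(f_t^z)$ on $V_{i_0}^{\lambda_{i_0}}$ (Lemma~\ref{l-v+-}) from the non-concentration estimate for $h\mapsto\|\pi_{i_0}(\rho(h)v_{i_0})\|$ on $B_1^+$ (Lemma~\ref{l-c-alpha}). The only cosmetic differences are that you use the continuous layer-cake formula where the paper sums over a dyadic decomposition of $B_1^+$, and you carry a projection-norm constant $C_1$ where the paper instead normalizes by assuming the isotypic and Lyapunov decompositions are orthogonal.
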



\begin{proof}
	Without loss of generality, we assume further that the Euclidean norm $\|\cdot\|$ on $V$ satisfies the following  properties:
	\begin{itemize}
		\item 	Lyapunov subspaces of  $A_F$  are orthogonal to each other.
		\item  $H$-isotropic subspaces  $V_i\ (1\le i\le m)$ are orthogonal to each other.
	\end{itemize}

Let
	\begin{equation}\label{constant-c-r}
  R_i=\sup_{v\in V_i, \|v\|=1, h\in B^+_1}\|\pi_i(\rho(h)v)\|\quad \mbox{and } \quad R=\max\{ R_i: 1\le i\le m   \} .
	\end{equation}
	Let $C=\max\{C_i: 1\le i\le m \}$ where   $C_i$ is given  in   \eqref{constant-c-alpha}.  Let $\theta'=\max\{\theta_i: 1\le i\le m \}$.
	
	According to Lemma \ref{l-v+-},
	there exists  $T_{\frac{\delta}{{2\theta}}}>0$ such that
     (\ref{ine-v+-}) holds  	for any  $t\ge T_{\frac{\delta }{{2\theta}}}$,  any nonzero   $v\in V^{\lambda_i} \ (1\le i
     \le m)$  and  any
	$z\in B_1^c$, i.e.,
	\begin{align*}
	\|\rho(f_t  ^z  )v\|^{- \theta }\le e^{  -(1-{\frac{\delta}{{2\theta}}}) \theta
		\lambda_i t} \|v\|^{- \theta }
	\le  e^{-(\theta-\frac{\delta}{2} ) \lambda t}  \|v\|^{- \theta  }.
	\end{align*}
    This inequality and the assumption of the norm implies that for all nonzero $v\in V_i$ and $t\ge T_{\frac{\delta}{2\theta}}$
    	\begin{equation}\label{ine-f-t}
    \|\rho(f_{ t}  ^z  h)v\|^{ -\theta }\le  e^{-(\theta-\frac{\delta}{2})\lambda t }\|\pi_i(\rho(h)v)\|^{- \theta },
    \end{equation}
    where $\frac{1}{0}$ is interpreted as $\infty$.
	Let   $T_{\theta, \delta}\ge T_{\frac{\delta}{2\theta}}$ be a  large enough real number  so that
	 $t\ge T_{\theta, \delta }$ implies
	\begin{equation}\label{constant-t2}
	\frac{(2m)^{\theta'} CR^{\theta'-\theta}}{1-2^{\theta-\theta_0}} e^{-(\theta-\frac{\delta}{2})\lambda t }\le e^{-(\theta-\delta)\lambda t }.
	\end{equation}

	Let $v$ be a unit vector of $V$. We write $v=v_1+\cdots+v_m$ where $v_i\in V_i$.
	Since we assume different  $V_i$ are orthogonal to each other, there exists an integer  $i\in [1, m]$ such that $m\|v_i\|\ge \|v\|=1$.

	There is  a disjoint union decomposition of $B_1^+$ as   $$E(v_i,0)\cup\left ( \cup_{n\ge 0} E^{+}(v_i,2^{-n}R_i)\right),$$ where
	$$E^{+}(v_i,2^{-n}R_i)= E(v_i,2^{-n}R_i)\setminus E(v_i,2^{-n-1}R_i). $$
	Since $\mu (E(v_i, 0))=0$,  for any $ z\in B_1^c$ and
	$t\ge T_{\theta, \delta}$
	we have
	\begin{align*}
	\int_{B_1^+}\|\rho(f_{ t}  ^z  h)v\|^{- \theta }d\mu(h) &\le \sum_{n=0}^{\infty}\int_{E^{+}(v_i,2^{-n}R_i)} \|\rho(f_{ t}  ^z  h)v_i\|^{-\theta }d\mu(h) \\
	\text{(by \eqref{ine-f-t})}\quad &\le e^{-(\theta-\frac{\delta}{2}) \lambda t}\sum_{n=0}^{\infty}\int_{E^{+}(v_i,2^{-n}R_i)}\|\pi_i(\rho(h)v_i)\|^{- \theta }d\mu(h) \\
	\text{(by \eqref{constant-c-alpha})} \quad    &
	\le  e^{-(\theta-\frac{\delta}{2})\lambda t} \sum_{n=0}^{\infty} C_i 2^{\theta}(2^{-n}R_i)^{\theta_i-\theta} \|v_i\|^{-\theta_i} \\
	&\le \frac{m ^{\theta'}2^{\theta'} CR^{\theta'-\theta}}{1-2^{\theta-\theta_0}} e^{-(\theta-\frac{\delta}{2}) \lambda t} \\
	\text{(by \eqref{constant-t2})} \quad       &\le e^{-(\theta-\delta) \lambda t}.
	\end{align*}
\end{proof}

\section{Eskin-Margulis height function}\label{sec;height}


Let the notation be as in Theorem \ref{thm;general}.
In this section, we will establish a
uniform contraction  property of the EM  height function   on $G/\Gamma $ with respect to a family of one parameter
groups $F_z  \ (z\in B_1^c)$.

Recall that $G/\Gamma=\prod_{i=1}^m G_i/\Gamma_i$ where each $G_i/\Gamma_i$ is a nonuniform  irreducible quotient of a semisimple Lie group without compact factors.
Since we assume the projection of $A_F$ to each $G_i$ is nontrivial, we have
$
H=\prod_{i=1}^m H_i$, {where } $H_i=G_i\cap H$ { is a connected   normal subgroup of }$G_i $
with positive dimension.

Let us recall the definition of the EM height function from \cite{em}. The EM height function is constructed on each $G_i/\Gamma_i$ using a finite set $\Delta_i$ of $\Gamma_i$-rational parabolic subgroups of $G_i$. Recall that  a parabolic subgroup $P$ of $G_i$
is $\Gamma_i$-rational  if the unipotent radical of $P$ intersects $\Gamma_i$ in a lattice.
If the rank of $G_i$ is bigger than one, then Margulis' arithmeticity theorem implies that there is a $\QQ$-structure on  $G_i$ such that $\Gamma_i$ is commensurable with $G_i(\ZZ)$. In this case the set
 $\Delta_i$ consists of  standard $\QQ$-rational maximal parabolic subgroups of $G_i$ with respect to a fixed $\QQ$-split torus and fixed positive roots. So the irreducibility of $\Gamma_i$
 implies that no conjugates of $H_i$ is contained in any $P\in \Delta_i$.
 The same conclusion holds in the case where
 $G_i$ has rank one. The reason is that  in this case  $H_i=G_i$ and $\Delta_i=\{P \}$ where $P$ is a maximal parabolic subgroup defined over $\RR$.

For each $P_{i, j}\in \Delta_i$, there exists a representation  $\rho_{i,j}: G_i\to \GL(V_{i,j}) $ on a normed vector space  and a nonzero  vector $w_{i,j}\in V_{i,j}$ such that the stabilizer of $\RR w_{i,j} $ is $P_{i,j}$.
We consider  $\rho_{i,j}$ as a representation of  $G$ so that $\rho(G_s)$ is the
identity linear map  if $s\neq i$.
Let $V_{i,j}^{H}$ be the
$H$-invariant
 subspace of $V_{i,j}$ consisting of $H$-invariant vectors. Let $\pi_{i,j}$ be the projection of $V_{i,j}$ to the $H$-invariant subspace
$V_{i,j}' $   complementary to  $V_{i,j}^{H}$.
Since no conjugates of $H_i$ is contained in $P_{i,j}$ and $G_i=K_{i} P_{i,j}$ for some maximal compact subgroup $K_i$ of $G_i$, there exists $C\ge 1 $  such that
\[
 \|v\|\le  C \|\pi_{i,j} (v)\|
\]
for all $v\in \rho_{i,j}(G) w_{i,j}$.
Note that  $V_{i,j}'$ is
 $G$-invariant and it  has no nonzero $H$-invariant vectors.
Therefore, Lemma \ref{l-contraction} implies the following lemma which  corresponds to  {\bf Condition A} in \cite{em}.

\begin{lemma}
	\label{lem;crutial}
	For each pair of  index $i,j$
	there exist positive constants  $\theta_0^{i,j}$ and $\lambda^{i,j}$ such that for any  $0<\delta<\theta<\theta_0^{i,j}$, any nonzero $v\in\rho_{i,j}( G) w_{i,j}$ and any $z\in B^c_1$ one has
	\begin{align}\label{eq;kao}
\int_{B _1^+}\|\rho_{i, j} (f_t  ^z  h) v  \|^{-\theta}\dd h \le
 e^{-(\theta-\delta)t\lambda^{i,j} } \|v\|^{-\theta}
	\end{align}
	provided $t\ge T_{\theta, \delta }^{i,j}$
	 where $T_{\theta, \delta}^{i,j}>0$ is a constant  depending on $\theta$ and $\delta $.
\end{lemma}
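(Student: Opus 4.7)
The plan is to apply Lemma~\ref{l-contraction} to the subrepresentation of $G$ on $V_{i,j}'$ and then transfer the resulting estimate to arbitrary orbit vectors $v\in\rho_{i,j}(G)w_{i,j}$ via the uniform comparability $\|w\|\le C\|\pi_{i,j}(w)\|$ on that orbit. By construction, $V_{i,j}'$ is a $G$-invariant subspace with no nonzero $H$-invariant vectors (the $G$-invariance of the splitting $V_{i,j}=V_{i,j}^H\oplus V_{i,j}'$ comes from $H$ being normal in $G$ together with $G$ being connected, which forces each $H$-isotypic summand of $V_{i,j}$ to be $G$-stable). So the hypothesis of Lemma~\ref{l-contraction} is satisfied, and its $H$-isotypic decomposition of $V_{i,j}'$ produces constants $\theta_0^{i,j}>0$ and $\lambda^{i,j}>0$ such that for every $0<\delta'<\theta<\theta_0^{i,j}$, every nonzero $u\in V_{i,j}'$, every $z\in B_1^c$, and every $t\ge T_{\theta,\delta'}^{i,j}$,
\[
\int_{B_1^+}\|\rho_{i,j}(f_t^z h)u\|^{-\theta}\,d\mu(h)\le e^{-(\theta-\delta')\lambda^{i,j} t}\|u\|^{-\theta}.
\]

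To upgrade this to arbitrary $v\in\rho_{i,j}(G)w_{i,j}$, I would combine two elementary observations. First, orthogonal projection is norm-decreasing and $\pi_{i,j}$ is $G$-equivariant, so for every such $v$ and every $g\in G$
\[
\|\rho_{i,j}(g)v\|^{-\theta}\le \|\pi_{i,j}(\rho_{i,j}(g)v)\|^{-\theta}=\|\rho_{i,j}(g)\pi_{i,j}(v)\|^{-\theta}.
\]
Second, applying the hypothesis $\|w\|\le C\|\pi_{i,j}(w)\|$ to $w=v$ gives $\|\pi_{i,j}(v)\|^{-\theta}\le C^\theta\|v\|^{-\theta}$. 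Integrating the first bound over $B_1^+$, using the Lemma~\ref{l-contraction} estimate with $u=\pi_{i,j}(v)$, and substituting the second bound yields
\[
\int_{B_1^+}\|\rho_{i,j}(f_t^z h)v\|^{-\theta}\,d\mu(h)\le C^\theta e^{-(\theta-\delta')\lambda^{i,j} t}\|v\|^{-\theta}.
\]

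Finally I would set $\delta'=\delta/2$ and enlarge $T_{\theta,\delta}^{i,j}$ (in a way depending on $C$, $\theta$, $\delta$, and $\lambda^{i,j}$) so that $C^\theta\le e^{(\delta/2)\lambda^{i,j} t}$ for all $t\ge T_{\theta,\delta}^{i,j}$. This absorbs the constant $C^\theta$ into the exponential and produces the stated bound $e^{-(\theta-\delta)\lambda^{i,j}t}\|v\|^{-\theta}$ of \eqref{eq;kao}. No genuine obstacle arises, since the analytic content of the contraction is entirely packaged in Lemma~\ref{l-contraction}; the only thing that really has to be checked is that the complementary subspace $V_{i,j}'$ is a bona fide $G$-subrepresentation so that Lemma~\ref{l-contraction} genuinely applies, which was already observed in the paragraph preceding the lemma. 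Hence the present argument is a short book-keeping step that turns a contraction property for the $H$-irrelevant subspace into one for an arbitrary vector in the orbit.
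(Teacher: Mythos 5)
Your proposal is correct and follows essentially the same route as the paper: apply Lemma~\ref{l-contraction} to the $G$-subrepresentation $V_{i,j}'$ (which is $H$-trivial-free), use the orthogonal, $G$-equivariant projection $\pi_{i,j}$ to pass from $v$ to $\pi_{i,j}(v)$, invoke the orbit estimate $\|v\|\le C\|\pi_{i,j}(v)\|$, and absorb $C^\theta$ by replacing $\delta$ with $\delta/2$ and enlarging the threshold time. The paper leaves the final absorption step as ``it is not hard to see,'' and you spell it out; otherwise the arguments coincide.
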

\begin{proof}
We assume without loss of generality that for all $V_{i,j} $ the norm $\|\cdot\|$ is  Euclidean and   $V_{i,j}^H$ and $V_{i,j}'$ are orthogonal to each other.
According to Lemma \ref{l-contraction}, for each representation $\rho_{i,j}|_{V_{i,j}'}$, there exist positive constants
$\theta^{i,j}_0$ and $ \lambda^{i,j}$ with the following properties:  for any $0<\delta<\theta<\theta^{i,j}_0$ there exists $T_{ \theta, \delta}>0$ such that for
any $t\ge T_{\theta, \delta}, z\in B_1^c$ and any nonzero  $v\in \rho_{i,j}( G) w_{i,j}$,
one has
\begin{align*}
\int_{B _1^+}\|\rho_{i, j} (f_t  ^z  h) v  \|^{-\theta}\dd h& \le
\int_{B _1^+}\|\rho_{i, j} (f_t  ^z  h) \pi_{i,j}(v ) \|^{-\theta}\dd h  \\
&\le e^{-(\theta-\delta)t\lambda^{i,j} } \|\pi_{i,j}(v)\|^{-\theta} \\
&\le C^{\theta} e^{-(\theta-\delta)t\lambda^{i,j} } \|v\|^{-\theta}.
\end{align*}
It is not hard to see from above estimate  that (\ref{eq;kao}) holds for sufficiently large $t$.
\end{proof}

Besides $\rho_{i,j}$,
the EM height function is  constructed  using  positive  constants $c_{i,j}$ and $ q_{i,j}$ which are combinatorial data determined  by the root system, see \cite[(3.22),(3.28)]{em}.
 Let
\begin{align}\label{eq;simple}
u _{i,j}(g\Gamma)= \max _{\gamma\in \Gamma } \frac{1}{\|\rho_{i,j} (g\gamma){w_{i,j}} \|^{1/c_{i,j} q_{i,j}}}
\end{align}
{where } $g\in G$.\footnote{Although only the product $c_{i,j}q_{i,j}$ is used in this paper,  the constants $c_{i,j}$ and $q_{i,j}$ are given by different combinatorial data and we use both of them for the consistency with \cite{em}.}
Let
\begin{equation}\label{eq;alpha1}
\begin{split}
\theta_1=\max\{\theta>0:  \frac{\theta}{q_{ij}c_{i,j}}\le \theta_0^{i,j} \quad \mbox{for all }i,j\}
\quad \mbox{ and }\quad
\alpha_1=\min_{ i,j } \{ \frac{\theta_1}{q_{ij}c_{i,j} }\lambda^{i,j}\},
\end{split}
\end{equation}
where $\theta_0^{i,j}$ and $\lambda^{i,j}$ are constants given by Lemma \ref{lem;crutial}.
We call $\alpha_1$ a contraction rate for the dynamical system $(G/\Gamma, F^+)$.
\begin{remark}
  We will see in next sections that $\alpha_1$ plays an important role in bounding the Hausdorff dimension of $\D(F^+,\ggm)$. We believe that optimal $\alpha_1$ is  possible to give the sharp bound of the dimension. By Remark \ref{rem-3}, the constant $\theta_{i,j}$ can be explicitly calculated, so are the constants $\theta_1$ and $\alpha_1$. Consequently, it will be clear in the proof in the next sections that the upper bound of the dimension we obtain can also be explicitly calculated, although not optimal.
\end{remark}

\begin{lemma}
	\label{lem;key}
	For every $\alpha< \alpha_1$, there exist $0<\theta<\theta_1$ and $T>0$ such that for
	all $t\ge T$ and $\epsilon $ sufficiently small depending on $t$, the EM height function
	\begin{align}
	\label{eq;u}
	u: G/\Gamma \to (0, \infty)\quad \mbox{defined by }	u(x)=\sum_{i,j}(\epsilon\,  u_{i,j}(x))^{\theta}
	\end{align}
 satisfies the following properties:
		\begin{enumerate}
		\item $u (x)\to \infty$ if and only if $x\to \infty $ in $G/\Gamma$.
		\item For any compact subset $K$ of $G$, there exists $C\ge 1$ such that
		$u(hx)\le C u(x)$ for all $h\in K$ and $x\in G/\Gamma$.
		\item There exists $b>0$ depending on $t$ such that for all  $z\in B_1^c$ and
		$x\in G/\Gamma$ one has
		\begin{align}\label{eq;tech}
		\int_{B_1^+} u(f_t  ^z   hx) \dd \mu(h)< e^{-\alpha t} u(x)+b.
		\end{align}
		\item  There exists $\ell \ge 1$ such that if $u(x)\ge \ell $, then for all $z\in B_1^c$
		\begin{align}\label{eq;contract}
				\int_{B_1^+} u(f_t  ^z   hx) \dd \mu(h)< e^{-\alpha t} u(x).
		\end{align}
	\end{enumerate}
\end{lemma}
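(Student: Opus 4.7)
The plan is to follow the construction of Eskin--Margulis \cite{em}, with Lemma \ref{lem;crutial} playing the role of their Condition A. Because Lemma \ref{lem;crutial} is already uniform in $z \in B_1^c$, the uniformity in $z$ required by Lemma \ref{lem;key} will come for free: no step of the EM construction introduces any $z$-dependence that the new Condition A does not already control.

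First I would fix the parameters. Since $\min_{i,j} \theta \lambda^{i,j}/(c_{i,j} q_{i,j})$ approaches $\alpha_1$ as $\theta \nearrow \theta_1$ and $\alpha < \alpha_1$, I choose $\theta < \theta_1$ and $\delta > 0$ small enough that, for some $\alpha' \in (\alpha, \alpha_1)$ and some $T_0 > 0$, Lemma \ref{lem;crutial} produces
\[
\int_{B_1^+} \bigl\| \rho_{i,j}(f_t^z h) v \bigr\|^{-\theta/(c_{i,j} q_{i,j})} \dd \mu(h) \le e^{-\alpha' t} \| v \|^{-\theta/(c_{i,j} q_{i,j})}
\]
uniformly for $v \in \rho_{i,j}(G) w_{i,j} \setminus \{0\}$, $z \in B_1^c$, $t \ge T_0$ and all pairs $(i,j)$. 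Property (1) is then the standard reduction-theoretic fact that $\max_{i,j} u_{i,j}$ is proper on $\ggm$: each $\| \rho_{i,j}(g\gamma) w_{i,j} \|$ measures the depth into the cusp of $G_i/\Gamma_i$ attached to $\gamma^{-1} P_{i,j} \gamma$, and the union over $(i,j,\gamma)$ of these cusps exhausts $\ggm$ outside a compact set. Property (2) is immediate from continuity of the finitely many $\rho_{i,j}$: for $h$ in a fixed compact set $K$, $\| \rho_{i,j}(hg\gamma) w_{i,j} \|$ and $\| \rho_{i,j}(g\gamma) w_{i,j} \|$ are uniformly comparable in $\gamma$, which after raising to the $\theta$-th power and summing yields the required $C$.

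The main work is in (3). For a fixed coset representative $\gamma$, applying the displayed integral estimate to $v = \rho_{i,j}(g\gamma) w_{i,j}$ immediately gives the desired contraction of the single contribution $\| \rho_{i,j}(gh\gamma) w_{i,j} \|^{-\theta/(c_{i,j} q_{i,j})}$. The nontrivial step, exactly as in Eskin--Margulis, is passing from a single $\gamma$ to the $\max$ over $\gamma$ hidden in $u_{i,j}$, and then summing over $(i,j)$. This is handled by their max-to-sum machinery: the combinatorial constants $c_{i,j}, q_{i,j}$ are chosen from the root-system data of each $G_i$ so that, provided $\epsilon$ is small enough depending on $t$, the $\theta$-th power of $u$ at $f_t^z h x$ is dominated by a sum of single-$\gamma$ integrands of the form above, plus a bounded error which is absorbed into an additive constant. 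This produces
\[
\int_{B_1^+} u(f_t^z h x) \dd \mu(h) \le e^{-\alpha' t} u(x) + b \quad \text{for all } z \in B_1^c,\ t \ge T,
\]
which is stronger than (3) since $\alpha' > \alpha$. Property (4) is then an immediate consequence: set $\ell = b/(e^{-\alpha t} - e^{-\alpha' t})$, so that $u(x) \ge \ell$ forces $e^{-\alpha' t} u(x) + b \le e^{-\alpha t} u(x)$.

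The main obstacle is the max-to-sum step underlying (3). In the original EM argument this is a delicate combinatorial calculation on the Weyl chamber of each simple factor, keyed precisely to the numerology of $c_{i,j}$ and $q_{i,j}$ and to the choice of $\epsilon$ small in terms of $t$. In our setting it must be executed along the one-parameter family $F_z$ rather than just $A_F$; however, since Lemma \ref{lem;crutial} already supplies the uniform per-$\gamma$ contraction used as input, the combinatorics is unchanged and the EM argument transfers to our setting with $f_t^z$ in place of $a_t$ without essential modification.
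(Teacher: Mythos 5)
Your proposal is correct and follows essentially the same route as the paper: fix $\theta<\theta_1$ and an intermediate rate so that Lemma~\ref{lem;crutial} gives the uniform-in-$z$ version of Eskin--Margulis's Condition~A, quote the EM properness and quasi-invariance arguments for (1)--(2), run the EM \S3.2 ``competing $\gamma$'' estimate for (3) with $\epsilon$ small relative to $t$ absorbing the error, and deduce (4) from the strict contraction rate $\alpha'>\alpha$. The only cosmetic difference is that the paper works with $\alpha+\delta$ in place of your $\alpha'$ and sets $T\ge \log 2/\delta$ to absorb the extra factor of $2$; your handling of (4) via $\ell=b/(e^{-\alpha t}-e^{-\alpha' t})$ is the same standard step that the paper compresses into the remark that (4) is a corollary of (3).
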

\begin{proof}
	It  follows from the corresponding results for each $G_i/\Gamma_i$
	proved in \cite{em} that
	the first two conclusions hold for any choice of $\theta$ and $\epsilon$.  Note that (4) is a direct corollary of (3).

Now we prove (3). 	Let $n$ the cardinality of the indices  $i,j$ appeared in the definition of $u$.
	We fix $\delta >0$ sufficiently small such that $$
	\alpha+\delta +\frac{\delta \lambda^{i,j}}{c_{i,j} q_{i,j}}<\alpha_1\quad \forall \ i,j.$$
According to the definitions of $\theta_1$, $\alpha_1$ and the choice of $\delta $ above,		 there exists $\theta>0$  such that
\[
\theta<\theta_1\quad \mbox{and } \quad\frac{(\theta-\delta) \lambda^{i,j}}{c_{i,j} q_{i,j}}\ge \alpha +\delta\quad \forall\  i,j.
\]
Let  $\delta_{i,j}=\delta/ c_{i,j} q_{i,j}$, $\theta_{i,j}
=\theta/ c_{i,j} q_{i,j}$, then according to Lemma \ref{lem;crutial} there exists $T^{i,j}>0$
such that for $t\ge T^{i,j}$ one has (\ref{eq;kao}) holds with $\delta=\delta_{i,j}$
and $\theta=\theta_{i,j}$.
We will show that Lemma \ref{lem;key} holds for  $T=\frac{\log 2}{\delta }+\max_{i,j} T^{i,j}$.

Now we fix $0<\epsilon<1$, $x=g\Gamma\in G/\Gamma, t\ge T$ and $i, j$.  According to the definition of $u_{i,j}(x)$, there exists $\gamma\in \Gamma$ such that
$u_{i, j}(x)=\frac{1}{\|\rho(g \gamma) w_{i,j}  \|^{1/c_{i,j}q_{i,j}}}$. For any $h\in B_1^+$
and $z\in B_1^c$,
if $u_{i,j}(f_t^zhx)=\frac{1}{\|\rho(f_thg \gamma) w_{i,j}  \|^{1/c_{i,j}q_{i,j}}}$, then we can use (\ref{eq;kao}). Otherwise, there exist $0<\kappa<1$, $b>0$ and $C'\ge 1$ where $b$ and $C'$
depend on $t$ such that
\[
(\epsilon u_{i,j}(f_t^zhx))^\theta\le C' \epsilon^\kappa (\epsilon u(x))^\theta+\frac{b}{n}.
\]
These facts are proved in \cite[\S 3.2]{em}.
In summary,   we have
	\begin{align*}
	\int_ {B_1^+}    (    \epsilon u_{i, j}(f_t h x)      )^\theta\dd h    & \le
  \epsilon^\theta 	\int_{B_1^+ }  \frac{1}{\|\rho_{i,j}(f_t h g \gamma) w_{i,j}  \|^{\theta /c_{i,j}q_{i,j}}} \dd h  + \epsilon ^\kappa C' u(x)+\frac{b}{n} \\
	&\le e^{-(\theta-\delta)    t \lambda^{i,j} /c_{i,j} q_{i,j} }     (  \epsilon u_{i,j}(x))^\theta  + \epsilon ^\kappa C' u(x)+\frac{b}{n}\\
	&\le e^{-(\alpha +\delta)t}(  \epsilon u_{i,j}(x))^\theta + \epsilon ^\kappa C' u(x)+\frac{b}{n}.
	\end{align*}

Therefore, we have
	\[
		\int_ {B_1^+}        u(f_t h x)      \dd h     \le
 e^{-(\alpha +\delta)t} u(x) + n\epsilon ^\kappa C' u(x)+b.
	\]
	We choose $\epsilon $ sufficiently small so that $n\epsilon ^\kappa C' \le e^{-(\alpha+\delta) t}$, then (\ref{eq;tech}) holds.
\end{proof}

\section{Applications of the uniform contraction property}\label{sec;first}

 In this section we will introduce and study some auxiliary sets closely related to $\D(F^+, \ggm)$ using the uniform contraction property of the EM height function established in Lemma \ref{lem;key}. To be specific, we will prove some covering results for these auxiliary sets in Proposition \ref{l-main2} and these covering results will play an important role in bounding the Hausdorff dimension of $\D(F^+, \ggm)$.

  Let
 $\alpha_1$ be a contraction rate of the dynamical system $(G/\Gamma, F^+)$
 given by (\ref{eq;alpha1}) and let  $\lambda$ be the top Lyapunov exponent of $A_F$ in the representation $(\Ad, \mathfrak g)$.
We fix $\alpha<\alpha_1,t >0$ and a EM height function
 $u: G/\Gamma \to (0, \infty)$ so that  Lemma \ref{lem;key} holds.
 Let $\ell \ge 1$ so that  (\ref{eq;contract}) holds for all $z\in B_1^c$ if $u(x)\ge \ell$.
  By Lemma \ref{lem;key} (3), there exists $C\ge 1$ such that
 \begin{align}\label{eq;c}
 C^{-1} u(x)\le u(f_s hx)\le C u(x) \quad \mbox{for all } 0\le s\le t, h\in B_2^G
 \mbox{ and } x\in G/\Gamma.
 \end{align}

We also fix an auxiliary
 $\delta>0$ (which will go to zero finally)
and assume that   $t$ is sufficiently large so  that
 according to Lemma \ref{l-v+-}
 { for all } $r\le 1, z\in B_1^c$
 \begin{align}
 B_{e^{-(\lambda+\delta) t} r}^+ \subset  f_{-t}  ^z  B_r^+f_{t}  ^z   &\subset B_{r/4}^+; \label{l-t4}\\
 B_{e^{-\delta t} r}^c\subset f_{-t}  ^z  B_r^cf_{t}  ^z   &\subset  B_{e^{\delta t} r}^c;\label{eq;B0}\\
 \label{eq;details}
 2&< e^{\delta (\alpha+1) t/2}.
 \end{align}

Note that the logarithm map from  the metric space   $(B_1^+, \dist)$ to the Lie algebra  $\mathfrak g^+$ (with the fixed Euclidean structure)
is a   bi-Lipschitz homeomorphism    to its image. Therefore $(B_1^+, \dist)$ is Besicovitch, see \cite{mat}, namely, for any subset $D$ of $B_1^+$ and a covering of $D$ by balls centered at $D$, there is a finite sub-covering such that each element of $D$ is covered by at most
$E'$ times.
Therefore, there exists $E\ge E' $ such that for all $0<r\le 1$, the set
$B^+_{1/2}$ can be covered by no more than  $E r^{-d}$ open balls of radius $r$, where $d=\dim G^+$.

We use $|I|$ to denote the cardinality of a finite set $I$. The following is the
main result of this section.

\begin{prop}\label{l-main2}	
Let $x\in G/\Gamma$.  There exists
$0<\sigma<1$ and $E_0\ge 1$   such that for
 $ z\in B_1^c$ and  $N\in \NN$,
 the set
\begin{equation}\label{def-d+}
\D_x(z, N, \sigma,  C^2\ell )\df \{h\in B_{1/2}^+ : |\{1\le n\le N : u (f^z_{ nt}
hx)\ge C^2\ell  \}|\ge \sigma N\}
\end{equation}
can be covered by no more than $E_0e^{(d\lambda-{\alpha}+\delta( d+\alpha)) tN}$ open balls of radius $e^{-(\lambda+\delta) t N}$  in $B_1^+$.
 \end{prop}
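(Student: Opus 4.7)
The plan is to build a tree of Besicovitch covers of $B_{1/2}^+$ at geometrically decreasing scales, use the spatial regularity \eqref{eq;c} of the EM height function to translate the pointwise condition defining $\D_x$ into a combinatorial condition on branches of the tree, and then combine the contraction inequality \eqref{eq;contract} with Markov's inequality to show that the relevant branches are exponentially rare.

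First, I would fix scales $r_n = e^{-(\lambda+\delta)tn}/4$ so that iterating \eqref{l-t4} gives $f_{nt}^{z} B_{r_n}^+ f_{-nt}^{z} \subset B_{1/4}^+ \subset B_1^+$; this is precisely what is needed so that after the change of variables $\hat h = f_{nt}^{z} h f_{-nt}^{z}$ (whose Jacobian on $G^+$ equals $e^{nt\Lambda}$, with $\Lambda$ the sum of positive Lyapunov exponents of $a_1$ on $\mathfrak g^+$), the contraction estimate can be applied on $B_1^+$. Using the Besicovitch property of $G^+$, I would construct nested refinements $\mathcal{C}_0, \mathcal{C}_1, \ldots$ of $B_{1/2}^+$ with $\mathcal{C}_n$ consisting of balls of radius $r_n$, and each $B \in \mathcal{C}_n$ subdivided into at most $E e^{d(\lambda+\delta)t}$ children in $\mathcal{C}_{n+1}$ with bounded overlap. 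Because the conjugate $f_{nt}^{z}(hh_B^{-1})f_{-nt}^{z}$ lies in $B_{1/2}^+ \subset B_2^G$ whenever $h \in B \in \mathcal{C}_n$, the regularity \eqref{eq;c} (applied with $s=0$) gives $u(f_{nt}^{z}hx) \asymp_C u(f_{nt}^{z}h_B x) =: u_n(B)$; calling $B$ \emph{heavy} if $u_n(B) \ge C\ell$, it follows that every $h \in \D_x(z,N,\sigma,C^2\ell)$ lies in a chain $B_0 \supset \cdots \supset B_N$ in which at least $\sigma N$ of the $B_n$'s are heavy.

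The analytic core is the per-step contraction of the integrated mass on a heavy ball: the change of variables rewrites $\int_B u(f_{(n+1)t}^{z}hx)\dd\mu(h) = e^{-nt\Lambda}\int_{f_{nt}^{z}B_{r_n}^+ f_{-nt}^{z}}u(f_t^{z}\hat h y_n)\dd\mu(\hat h)$ with $y_n = f_{nt}^{z}h_B x$, and by \eqref{eq;contract} this is bounded by $e^{-nt\Lambda}e^{-\alpha t}u_n(B)$. Converting back to pointwise sums over children via \eqref{eq;c} yields $\sum_{B' \subset B} u_{n+1}(B') \le C_1 e^{(d(\lambda+\delta)-\alpha)t}u_n(B)$, and hence by Markov the number of heavy children of a heavy $B$ is at most $C_2 e^{(d(\lambda+\delta)-\alpha)t}$ — a factor $e^{-\alpha t}$ smaller than the naive branching rate $Ee^{d(\lambda+\delta)t}$. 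For non-heavy $B$ one only has the weaker \eqref{eq;tech} with an additive constant $b$, but by taking $t$ (equivalently $\ell$) large enough, the number of heavy children of a non-heavy parent is bounded by a fixed constant.

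For the combinatorial counting, each leaf in $\D_x$ has a heaviness pattern $s \in \{H,L\}^N$ with at least $\sigma N$ H's; for each fixed pattern the number of realising chains is at most a product of per-step factors, giving a total bound of the shape $E_1 \cdot \binom{N}{\lceil \sigma N\rceil}\cdot (Ee^{d(\lambda+\delta)t})^N \cdot e^{-\sigma\alpha tN}$. The target bound $E_0 e^{(d(\lambda+\delta)-(1-\delta)\alpha)tN}$ is then met provided $\log E + \log 2 \cdot H(1-\sigma) + (1-\delta-\sigma)\alpha t \le 0$, where $H$ is binary entropy; this is achieved by first taking $\sigma$ close enough to $1$ to make the entropy term small and then enlarging $t$ (modifying $T$ in Lemma \ref{lem;key} if necessary). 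The main obstacle is the mismatch between the Jacobian $e^{-nt\Lambda}$ and the scaling $r_{n+1}^{-d}$, which produces an $n$-dependent correction that must be absorbed into the $\delta$-slack allowed by \eqref{eq;details}, together with the need to handle the additive term $b$ when passing through non-heavy ancestors; both issues are dealt with by choosing $t$ sufficiently large and then selecting $\sigma$ in the narrow window $(1-\delta+O(1/t),1)$.
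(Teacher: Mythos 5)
Your proposal attempts a multi-scale tree argument in the spirit of Kadyrov--Kleinbock--Lindenstrauss--Margulis: build a nested Besicovitch cover, call a ball ``heavy'' according to the value of the height function at its center, and count heavy chains. The paper does something structurally different. It works with a \emph{single} Besicovitch cover at the finest scale $e^{-(\lambda+\delta)tN}$ and reduces everything to a one-shot measure bound. Concretely, it shows via Lemma \ref{lem;add} that each such ball centered at $\D_x(z,N,\sigma,C^2\ell)$ lies in $\D_x(z,N,\sigma,C\ell)$, decomposes the latter over the $2^N$ index sets $I$, and bounds $\mu(\D_x(z,I,C\ell))$ by transferring to a convolution measure $\mu_p = \widetilde\mu_{p-1}*\cdots*\widetilde\mu_0$. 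The crucial point (Lemma \ref{l-rd-conv}) is that the Radon--Nikodym derivative $\dd\mu_p/\dd\mu \ge 1$ on $B_{1/2}^+$, so $\mu(\D_x(z,I,C\ell)) \le \mu_p(Z_p)$, and $\mu_p(Z_p)$ is then controlled by iterating \eqref{eq;contract} step by step against the product structure of $\mu_p$. The ball count follows by dividing the measure bound by $\mu(B^+_{e^{-(\lambda+\delta)tN}})$.

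The gap in your proposal is exactly the mismatch you flag at the end, and it is not absorbable. Your scales are $r_n \asymp e^{-(\lambda+\delta)tn}$, chosen so that $f_{nt}^z B_{r_n}^+ f_{-nt}^z \subset B_1^+$. But the Jacobian of the conjugation $h \mapsto f_{nt}^z h f_{-nt}^z$ on $G^+$ is $e^{nt\Lambda}$, where $\Lambda = \sum_{\chi>0}\chi\dim\Lg^{\chi}$ is the \emph{sum} of the positive Lyapunov exponents, and in general $\Lambda < d\lambda < d(\lambda+\delta)$. Consequently $\mu\bigl(f_{nt}^z B_{r_n}^+ f_{-nt}^z\bigr) \asymp e^{nt\Lambda} r_n^d \asymp e^{-n(d(\lambda+\delta)-\Lambda)t} \to 0$: the conjugated ball becomes an exponentially thin sliver of $B_1^+$, and bounding its integral by the integral over all of $B_1^+$ is extremely lossy. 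Tracing through your ``converting back'' step, the per-step bound you want, namely $\sum_{B'\subset B} u_{n+1}(B') \le C_1\, e^{(d(\lambda+\delta)-\alpha)t} u_n(B)$, actually acquires an extra factor $e^{n(d(\lambda+\delta)-\Lambda)t}$ coming from $r_{n+1}^{-d}\,e^{-nt\Lambda}$. This factor is at least $e^{nd\delta t}$ even in the best case $\Lambda = d\lambda$, and its product over $n=0,\dots,N-1$ is $e^{\Theta(N^2)}$, which cannot be hidden in the $\delta$-slack of \eqref{eq;details} (that slack only buys an $e^{O(N)}$ correction). A secondary issue is that Markov's inequality bounds the number of heavy children in terms of $u_n(B)$, which is unbounded along chains, whereas the paper tracks the full integrals $s(k)$ and never needs a per-ball count. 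The convolution construction sidesteps the Jacobian problem entirely: instead of pushing forward \emph{one} shrinking ball through the \emph{whole} conjugation $f_{nt}^z$, it composes the one-step pushforwards $\widetilde\mu_k$ of $\mu|_{B_1^+}$, and the resulting measure $\mu_p$ automatically has density $\ge 1$ on $B_{1/2}^+$ regardless of the spread in Lyapunov exponents. To make your tree argument work one would have to replace round balls by non-isotropic boxes aligned with the Lyapunov decomposition (as KKLM can do in $\SL_{m+n}$ where the exponents are equal by symmetry), but in the general setting of Theorem \ref{thm;general}, and especially with the $z$-conjugated unipotent parts present, this is exactly the complication the paper's random-walk reformulation is designed to avoid.
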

The rest of this section is devoted to show that Proposition \ref{l-main2} holds for
\begin{align}
\label{eq;sigma}
\sigma& =\frac{(1-\delta/2)\alpha t +\log C}{\alpha t +\log C}
\end{align}
In the rest of this section we fix $ z\in B_1^c$ and $N\in \NN$. We begin with the following simple observation.
\begin{lemma}
	\label{lem;add}
	If  $B\subset  G^+$ is a ball of radius $e^{-(\lambda+\delta) t N}$ centered at  $\D_x({ z }, N, \sigma , C^2\ell )$, then $B
	\subset \D_x({ z }, N, \sigma,  C\ell )$.
\end{lemma}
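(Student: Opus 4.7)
The plan is to run a short shadowing argument. Take $h_0 \in \D_x(z, N, \sigma, C^2\ell)$ as the center of $B$ and $h \in B$ arbitrary; I want to show that the orbit $\{f_{nt}^z h x\}_{n=1}^N$ stays uniformly close to $\{f_{nt}^z h_0 x\}_{n=1}^N$, so that the Lipschitz estimate \equ{c} for $u$ turns the $\sigma N$ ``good'' times for $h_0$ (those with $u \geq C^2 \ell$) into $\sigma N$ good times for $h$ (with $u \geq C\ell$).

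First I would use right-invariance of the metric to write $h h_0^{-1} \in B_{e^{-(\lambda+\delta)tN}}^+$, and then factor
\[
f_{nt}^z h \;=\; \gamma_n \cdot f_{nt}^z h_0, \qquad \gamma_n \;\df\; f_{nt}^z (h h_0^{-1}) f_{-nt}^z .
\]
Next, I would iterate the upper inclusion in \equ{l-t4} $n$ times. Rewriting the lower bound $B_{e^{-(\lambda+\delta)t}r}^+ \subset f_{-t}^z B_r^+ f_t^z$ as $f_t^z B_s^+ f_{-t}^z \subset B_{s\,e^{(\lambda+\delta)t}}^+$ (valid as long as $s\,e^{(\lambda+\delta)t} \leq 1$), and starting with $s = e^{-(\lambda+\delta)tN}$, each of the $n\le N$ conjugations multiplies the radius by $e^{(\lambda+\delta)t}$ and keeps it below $1$. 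Thus $\gamma_n \in B_{e^{(\lambda+\delta)t(n-N)}}^+ \subset B_1^+ \subset B_2^G$ for all $1 \leq n \leq N$.

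With that in hand, \equ{c} at $s=0$ gives
\[
u(f_{nt}^z h x) \;=\; u(\gamma_n \cdot f_{nt}^z h_0 x) \;\geq\; C^{-1}\, u(f_{nt}^z h_0 x).
\]
Since $h_0 \in \D_x(z, N, \sigma, C^2\ell)$, there are at least $\sigma N$ indices $n \in \{1,\ldots,N\}$ with $u(f_{nt}^z h_0 x) \geq C^2 \ell$; for each such $n$ we then get $u(f_{nt}^z h x) \geq C\ell$, so $h \in \D_x(z, N, \sigma, C\ell)$, as required. The only bookkeeping item is verifying that $h \in B_{1/2}^+$ so that it lies in the defining domain of the set on the right; but for the regime of interest ($t$ and $N$ large enough that $e^{-(\lambda+\delta)tN} \ll 1/2$), this is automatic from $h_0 \in B_{1/2}^+$. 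I do not expect any real obstacle here --- the content is simply packaging the shadowing estimate from \equ{l-t4} together with the Lipschitz bound \equ{c} on $u$.
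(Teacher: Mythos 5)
Your proof is correct and follows essentially the same route as the paper: right invariance reduces the comparison to $hh_0^{-1}$, iterating the inclusion \equ{l-t4} keeps the conjugate $f_{nt}^z(hh_0^{-1})f_{-nt}^z$ inside $B_1^+$ for all $n\le N$, and then \equ{c} converts each good time for $h_0$ (with $u\ge C^2\ell$) into a good time for $h$ (with $u\ge C\ell$). The small boundary issue you flag at the end --- that $h$ might land just outside $B_{1/2}^+$ --- is present in the paper's version as well and is harmless for the way the lemma is used (a measure bound in Proposition~\ref{l-main2}), so there is nothing further to fix.
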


\begin{proof}
	Let $h_0$ be the center of $B$ and $h\in B$.  It suffices to  show that for all   $1\le n\le  N$
  if  $u (f_{nt}^z h_0x)\ge C^2\ell $ then $u (f_{nt}^z h x)\ge C\ell $.
 By (\ref{l-t4})  we have
	\[
	\dist (f_{nt}^z h_0, f_{nt}^zh )=\dist (1_G, f_{nt}^zhh_0^{-1} f_{-nt}^z)< 1.
	\]
	By (\ref{eq;c})
	\[
	u (f_{nt}h x)=u (f_{nt}hh_0^{-1} f_{-nt}\cdot f_{nt}h_0 x)\ge C ^{-1}u (f_{nt}h_0 x)\ge C^{-1}\cdot C^2\ell = C\ell.
	\]
	
\end{proof}

For a subset  $I\subset \{ 1, \ldots, N \}$, we let
\begin{equation}\label{def-d+i}
\D_x(  z  ,  I,  C\ell  )=\{h \in B_{1/2}^+ :  u (f_{ nt}  ^z
hx)\ge C \ell   \mbox { for all } n\in I \}.
\end{equation}
Since
$
\D_x({ z }, N, \sigma,  C\ell )=\bigcup_{|I| \ge \sigma N} \D_x({ z },  I,  C\ell )
$,
one has
\begin{align}\label{eq;sum}
\mu(\D_x({ z }, N, \sigma,  C\ell ))\le \sum_{|I| \ge \sigma N}
\mu (\D_x({ z },  I, C\ell )).
\end{align}

The following lemma will play an important role in the proof of Proposition \ref{l-main2}.
\begin{lemma}\label{l-int-ine}
	Suppose that  $I\subset \{1, \ldots, N \}$  and  $|I| \ge \sigma N$. Then
	\begin{align}\label{eq;con}
	\mu (\D_x(z,  I, C\ell ) )\le {C^2}
	 u (x) e^{-(1-\delta/2)\alpha tN  }.
	\end{align}
	
\end{lemma}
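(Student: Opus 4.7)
The plan is to iterate the contraction property of the EM height function, invoking (\ref{eq;contract}) once at each step $n \in I$ and absorbing the intervening gaps via Lemma \ref{lem;key}(2). Write $I = \{n_1 < \cdots < n_k\}$ with $k = |I| \ge \sigma N$, set $n_0 = 0$, and let $s_j = n_{j+1} - n_j$. For $0 \le j \le k$ define the nested sets and integrated heights
\[
A_j \df \{h \in B_{1/2}^+ : u(f_{n_i t}^z h x) \ge C\ell,\ 1 \le i \le j\}, \qquad \Psi_j \df \int_{A_j} u(f_{n_j t}^z h x) \, \dd \mu(h),
\]
so $A_0 = B_{1/2}^+$ and $A_k = \D_x(z, I, C\ell)$. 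Markov's inequality at the final index $n_k \in I$ (on $A_k$ one has $u(f_{n_k t}^z h x) \ge C\ell$) gives $\mu(\D_x(z, I, C\ell)) \le \Psi_k / (C\ell)$, and Lemma \ref{lem;key}(2) applied to $\overline{B_{1/2}^+}$ together with $\mu(B_{1/2}^+) \le 1$ yields the base case $\Psi_0 \le C u(x)$.

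The heart of the argument is the inductive inequality $\Psi_{j+1} \le C^{s_j - 1} e^{-\alpha t} \Psi_j$. The factor $e^{-\alpha t}$ comes from applying (\ref{eq;contract}) at the single step from $n_j$ to $n_j + 1$: on $A_j$, the orbit point $y_h = f_{n_j t}^z h x$ satisfies $u(y_h) \ge C\ell \ge \ell$, so the contraction hypothesis is met. The factor $C^{s_j - 1}$ handles the $s_j - 1$ intermediate steps in $(n_j, n_{j+1}) \setminus I$: each contributes a multiplicative growth of at most $C$ by Lemma \ref{lem;key}(2), applied to the compact set $\{f_t^z : z \in B_1^c\}$ (enlarging $C$ if necessary). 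Bridging the \emph{averaged} contraction (\ref{eq;contract}) --- which integrates $u$ over a fresh copy of $B_1^+$ in the unstable direction --- with $\Psi_{j+1}$, which integrates over the already-present parameter $h \in A_{j+1} \subset B_{1/2}^+$, is the technical crux. The plan is to partition $A_j$ into balls of radius $\asymp e^{-(\lambda+\delta) t}$ centered at points $h_\alpha$; on each such ball, the conjugation identity $f_t^z \tilde h = (f_t^z \tilde h f_{-t}^z) f_t^z$ combined with the squeezing inclusions (\ref{l-t4}) reparametrizes $f_{(n_j + 1)t}^z h x$ as $k \cdot f_{(n_j + 1)t}^z h_\alpha x$ with $k$ running over a standard subset of $B_1^+$, placing the integral in the exact form required by (\ref{eq;contract}) at base point $y = f_{(n_j + 1)t}^z h_\alpha x$. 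Carefully controlling the boundary overlap of this partition, and verifying that the pointwise defining constraints of $A_{j+1}$ interact cleanly with the change of variables, is where I expect the main obstacle to lie.

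Telescoping the inductive inequality yields $\Psi_k \le C^{n_k - k + 1} e^{-k \alpha t} u(x)$. The value of $\sigma$ in (\ref{eq;sigma}) is designed so that the identity $\sigma(\alpha t + \log C) = (1 - \delta/2)\alpha t + \log C$ gives, using $n_k \le N$ and $k \ge \sigma N$,
\[
(n_k - k + 1) \log C - k \alpha t \le N \log C - \sigma N (\alpha t + \log C) + \log C = -(1 - \delta/2) \alpha t N + \log C.
\]
Therefore $\Psi_k \le C^2 u(x) e^{-(1 - \delta/2) \alpha t N}$, and dividing by $C\ell \ge C$ (using $\ell \ge 1$) delivers the claimed bound on $\mu(\D_x(z, I, C\ell))$, with the slack absorbed into the constant $C^2$.
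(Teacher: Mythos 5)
Your overall strategy is right, and the telescoping arithmetic at the end matches the paper's. But the inductive inequality $\Psi_{j+1} \le C^{s_j-1} e^{-\alpha t} \Psi_j$ --- which you correctly identify as ``the technical crux'' --- is not established, and the sketch you give for it would fail. The contraction hypothesis \eqref{eq;contract} asserts
\[
\int_{B_1^+} u(f_t^z k\, y)\,\dd\mu(k) < e^{-\alpha t}\, u(y),
\]
an average over a \emph{fresh} copy of $B_1^+$ acting right after $f_t^z$. In $\Psi_{j+1}$ the variable of integration is the same $h\in B_{1/2}^+$ already present at time $0$; pointwise one may well have $u(f_t^z\, y_h) \gg e^{-\alpha t}\, u(y_h)$, so the inequality cannot be read off directly. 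Your proposed fix --- partition $A_j$ into balls of radius $e^{-(\lambda+\delta)t}$, conjugate to produce a $k\in B_1^+$ --- has a scaling error: for $k = f_{(n_j+1)t}^z \tilde h f_{-(n_j+1)t}^z$ to land in $B_1^+$ you need $\tilde h$ of radius $\asymp e^{-\lambda(n_j+1)t}$, not $e^{-(\lambda+\delta)t}$. Even with that repaired, conjugating a small \emph{ball} in $G^+$ by $f_{n_j t}^z$ yields a thin ellipsoid whose axes scale like $e^{(\lambda'-\lambda)n_j t}$ for each positive exponent $\lambda'<\lambda$; it collapses onto a lower-dimensional slice of $B_1^+$ as $n_j$ grows and does not approximate the full ball that \eqref{eq;contract} averages over, so the change of variables does not produce a usable bound.

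The paper's proof bridges this gap by a genuinely different device: it replaces the single variable $h$ by a \emph{tuple} $(h_1,\ldots,h_p)\in(B_1^+)^p$, so that at each step of the iteration the innermost integral is over a fresh copy of $B_1^+$ and \eqref{eq;contract} applies literally (this is the sequence $s(k)$). The connection back to the measure of $\D_x(z,I,C\ell)\subset B_{1/2}^+$ is made through the map $\eta$ and the convolution measures $\mu_n=\widetilde\mu_{n-1}*\cdots*\widetilde\mu_0$: Lemma \ref{lem;contain} shows $\eta^{-1}(\D_x(z,I,C\ell))\subset Z_p$, and Lemma \ref{l-rd-conv} shows $\frac{\dd\mu_n}{\dd\mu}\ge 1$ on $B_{1/2}^+$, because the convolution telescopes with the supports $B_{1/4^k}^+$ and the last factor $\widetilde\mu_0=\mu|_{B_1^+}$ fills the ball. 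This is exactly the missing machinery that turns the averaged contraction into a bound on the parameter set, and I don't see how to avoid it along the lines you propose.
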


 We fix $I$ as in the statement of Lemma \ref{l-int-ine}.
Our strategy is  to estimate the measure of $\D_x({ z },  I,  C\ell )$
by
 relating it to a subset coming from random walks on $G/\Gamma$ with alphabet   $f_t  ^z   B_1^+$.
Let   $p =\sup I$ and  for $1\le k\le p$ let
\begin{equation*}
	Z_k=\{(h_1,\ldots,h_{k })\in (B_1^+)^{k }: u  (f  ^z  _{ t} h_n\ldots f  ^z  _{t} h_1x)
	\ge \ell \ \forall\   n\in (I\cap[1,k])\}.
\end{equation*}
Define $\eta :(B_1^+)^p \rightarrow G^+$ by
\begin{align}\label{eq;h}
\eta(h_1,\ldots, h_{p })= \tilde{h}_{p } \cdots \tilde{h}_{1}, \text{ where } \tilde{h}_{n}= f  ^z  _{-(n-1)t}h_n f  ^z  _{(n-1)t}.
\end{align}
We remark here that the image  of $\eta$ is contained in $B_2^+$ by (\ref{l-t4}). The following two lemmas are needed in the proof of Lemma \ref{l-int-ine}.

\begin{lemma}\label{lem;contain}
For all $h\in \D_x({ z }, I,  C\ell )$ one has $\eta^{-1}(h)\subset Z_p$.
\end{lemma}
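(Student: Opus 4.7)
The plan is, given $(h_1,\ldots,h_p)\in\eta^{-1}(h)$ with $h\in\D_x(z,I,C\ell)$ and $n\in I$ (so $1\le n\le p=\sup I$), to compare $u(f_t^z h_n\cdots f_t^z h_1 x)$ with $u(f_{nt}^z h x)$. The former is the quantity that must exceed $\ell$ in order for $(h_1,\ldots,h_p)$ to lie in $Z_p$, while the latter is at least $C\ell$ by the definition of $\D_x(z,I,C\ell)$. These two points of $G/\Gamma$ will differ by left multiplication by an explicit element $g\in G^+$, and the goal is to show $g\in B_2^G$, whence the Lipschitz-type estimate \eqref{eq;c} (specialized to $s=0$) yields $u(f_t^z h_n\cdots f_t^z h_1 x)\ge C^{-1} u(f_{nt}^z h x)\ge \ell$ as required.

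The key algebraic identity is that $F_z$ is a genuine one-parameter group (because $a_t$ and $u_t^z$ commute), so that $f_{nt}^z \tilde{h}_k f_{-nt}^z = f_{-(k-n-1)t}^z h_k f_{(k-n-1)t}^z$ for every $k$. Splitting $h=\tilde{h}_p\cdots \tilde{h}_1$ at index $n$ and conjugating the prefix by $f_{nt}^z$ gives
\[
f_{nt}^z h \;=\; g\cdot (f_t^z h_n f_t^z h_{n-1}\cdots f_t^z h_1),
\]
where $g$ is the product of the conjugated factors $f_{-(k-n-1)t}^z h_k f_{(k-n-1)t}^z$ for $k$ descending from $p$ to $n+1$. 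The right-hand factor is precisely the element acting on $x$ whose height appears in the definition of $Z_p$.

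Finally, to show $g\in B_2^+$: iterating \eqref{l-t4} a total of $j=k-n-1$ times (the hypothesis $r\le 1$ is preserved since the radii only decrease) places the $k$-th factor in $B_{4^{-j}}^+$. Using right-invariance of $\dist$ together with the triangle inequality one has $\dist(a_1\cdots a_m,1_G)\le\sum_i\dist(a_i,1_G)$, so $\dist(g,1_G)\le\sum_{j=0}^{p-n-1} 4^{-j}<4/3<2$. Right-invariance also gives $\dist(g^{-1},1_G)=\dist(g,1_G)$, so both $g$ and $g^{-1}$ lie in $B_2^G$, and \eqref{eq;c} at $s=0$ closes the argument. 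I do not anticipate a serious obstacle here: the lemma is essentially bookkeeping, combining the geometric contraction on $G^+$ from \eqref{l-t4} with the quasi-invariance of $u$ under right multiplication by $B_2^G$ from \eqref{eq;c}; the only minor point is to observe that the geometric series in $4^{-j}$ is summable uniformly in $p-n$, which it plainly is.
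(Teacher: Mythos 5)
Your argument is correct and is essentially the paper's own proof, with the algebra and the geometric-series bound on $\dist(g,1_G)$ written out explicitly where the paper leaves them as a terse remark invoking \eqref{l-t4}, \eqref{eq;h}, and right invariance. The decomposition $f_{nt}^z h = g\cdot(f_t^z h_n\cdots f_t^z h_1)$, the bound $g\in B_2^+ \subset B_2^G$, and the final application of \eqref{eq;c} all match the paper.
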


\begin{proof}
Suppose that  $\eta(h_1, \ldots, h_p)=h$ where $h_i\in B_1^+$. Then for all $n\le  p$
\[
\dist(f_{nt}^zh, f_{t}^zh_n\cdots f_t^z h_1)=\dist(f_{nt}^z\tilde h_{p }\cdots \tilde h_{n+1}f_{-nt}^z, 1_G)< 2,
\]
where we use (\ref{l-t4}), (\ref{eq;h}) and the right invariance of $\dist(\cdot, \cdot)$.
Therefore  by (\ref{eq;c}) we have  for $n\in I$
\begin{align*}
	u (f_{t}^zh_n\cdots f_t^z h_1x)\ge C^{-1}u (f_{nt}^zhx)\ge \ell .
\end{align*}
So $(h_1, \ldots, h_p)\in Z_p$ and the proof is complete.
\end{proof}

Let  $\widetilde \mu_{ n}$ be the Radon  measure on $G^+$ defined by
\begin{align}\label{eq;def}
\int_{G^+}\varphi(h) \dd\widetilde \mu_{ n}(h)=\int_{B_1^+}
\varphi(f_{-nt}  ^z  hf_{nt}  ^z  ) )\dd h
\end{align}
for all $\varphi\in C_c(G^+)$.
For any positive integer $n$ let
$\mu_{n}=\widetilde \mu_{ n-1}*\cdots *\widetilde \mu_{1}*\widetilde \mu_{0}$ be the measure on $G^+$ defined by the
$n$ convolutions.
Clearly, $\mu_n$ is absolutely continuous with respect to $\mu$ and $\mu_p$
is the pushforward of $(\mu|_{B_1^+})^{\otimes p}$ by the map $\eta$.
The following lemma shows that $\mu_n$ has density bigger than or equal to  one  at every $h\in B_{1/2}^+$.

\begin{lemma}\label{l-rd-conv}
For all  $n\le N $ and $ h\in B_{{1}/{2}}^+$ we have
$
\frac{d\mu_{n}}{d\mu}(h)\ge 1 .
$
\end{lemma}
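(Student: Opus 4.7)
The plan is induction on $n$, exploiting a volume cancellation between the explicit density of $\widetilde{\mu}_k$ and the $\mu$-measure of its support.

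First I would pin down $\frac{d\widetilde{\mu}_k}{d\mu}$ explicitly. The conjugation $\psi_k\colon G^+\to G^+$, $\psi_k(h) \df f_{-kt}^z h f_{kt}^z$, is a Lie group automorphism of $G^+$, since $f_{kt}^z = a_{kt} u_{kt}^z$ commutes with $a_1$ and hence normalizes $G^+$. Its Jacobian is the positive constant $J_k \df \bigl|\det \Ad(f_{-kt}^z)|_{\Lh^+}\bigr| = e^{-\Lambda^+ kt}$, where $\Lambda^+>0$ is the sum of the positive Lyapunov exponents of $\Ad(a_1)$ on $\Lh^+$ counted with multiplicity (the unipotent factor $u_{-kt}^z$ contributes a determinant of $1$). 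Therefore
\[
\frac{d\widetilde{\mu}_k}{d\mu}(h) = J_k^{-1}\,\mathbbm{1}_{\psi_k(B_1^+)}(h), \qquad \mu(\psi_k(B_1^+)) = J_k,
\]
the two being in exact balance, so that $\widetilde{\mu}_k$ has total mass $\mu(B_1^+)=1$. Iterating the right inclusion of (\ref{l-t4}) with $r=4^{-(k-1)}\le 1$ also yields $\psi_k(B_1^+) \subset B_{4^{-k}}^+$.

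I would then strengthen the target and prove by induction on $n$ that $\frac{d\mu_n}{d\mu}(h) \ge 1$ for every $h\in B_{R_n}^+$, where $R_n \df 1 - \sum_{j=1}^{n-1} 4^{-j}$. Since $R_1 = 1$ and $R_n \ge 2/3 > 1/2$ for all $n$, this implies the stated lemma. The base case $n=1$ is immediate from $\mu_1 = \widetilde{\mu}_0 = \mu|_{B_1^+}$. For the step, write $\mu_n = \widetilde{\mu}_{n-1}*\mu_{n-1}$ and apply the convolution density formula on the unimodular nilpotent group $G^+$:
\[
\frac{d\mu_n}{d\mu}(h) = \int \frac{d\widetilde{\mu}_{n-1}}{d\mu}(hy^{-1})\,\frac{d\mu_{n-1}}{d\mu}(y)\,d\mu(y).
\]
If $hy^{-1}\in \psi_{n-1}(B_1^+)\subset B_{4^{-(n-1)}}^+$, right-invariance of $\dist$ gives $\dist(h,y) = \dist(hy^{-1},1_G) < 4^{-(n-1)}$, so for $h\in B_{R_n}^+$ one has $y\in B_{R_{n-1}}^+$, and the inductive hypothesis makes the second factor $\ge 1$. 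A short change of variables (using unimodularity of $G^+$) then gives $\int \mathbbm{1}_{\psi_{n-1}(B_1^+)}(hy^{-1})\,d\mu(y) = \mu(\psi_{n-1}(B_1^+)) = J_{n-1}$, so $\frac{d\mu_n}{d\mu}(h) \ge J_{n-1}^{-1}\cdot J_{n-1} = 1$, closing the induction.

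The one thing to get right is the volume cancellation $J_k^{-1}\cdot \mu(\psi_k(B_1^+)) = 1$, which is what forces the lower bound to be exactly $1$ independently of $n$. Everything else is routine: the geometric series $\sum_{j\ge 1} 4^{-j} = 1/3 < 1/2$ leaves a uniform safety margin that lets the triangle inequality propagate the inductive hypothesis down to the ball $B_{1/2}^+$ specified in the lemma.
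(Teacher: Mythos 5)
Your argument is correct, but it is organized quite differently from the paper's. You compute the explicit density $J_k^{-1}\mathbbm{1}_{\psi_k(B_1^+)}$ of each conjugated measure $\widetilde\mu_k$, and close an induction step by step using the exact Jacobian cancellation $J_{n-1}^{-1}\cdot\mu(\psi_{n-1}(B_1^+))=1$ together with a shrinking sequence of radii $R_n=1-\sum_{j<n}4^{-j}$. The paper instead sidesteps the Jacobian entirely: it groups $\nu=\widetilde\mu_{n-1}*\cdots*\widetilde\mu_1$, notes that each $\widetilde\mu_k$ ($k\ge1$) is automatically a probability measure (pushforward by an automorphism preserves total mass) supported in $B_{4^{-k}}^+$, so that $\nu$ is a probability measure supported in $B_{1/2}^+$, and then convolves once more with $\widetilde\mu_0=\mathbbm{1}_{B_1^+}d\mu$ to get $\frac{d\mu_n}{d\mu}(h)=\int\varphi(h_1)\mathbbm{1}_{B_1^+}(h_1^{-1}h)\,d\mu(h_1)\ge\nu(B_{1/2}^+)=1$ for $h\in B_{1/2}^+$. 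The structural point the paper exploits is that only the uncconjugated factor $\widetilde\mu_0$ carries the indicator of the \emph{unit} ball, and everything else is just a probability measure living well inside it; this makes the volume cancellation invisible. Your route makes the cancellation explicit and is more computational, but reaches the same conclusion; both arguments rely on the same geometric inputs, namely the right inclusion of \equ{l-t4} and the right invariance of the metric.
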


\begin{proof}
The conclusion is clear if $n=1$. Now we assume $n>1$ and let
$$\nu= \widetilde \mu_{ n-1}* \widetilde \mu_{n-2}* \cdots  * \widetilde \mu_{ 1}.$$
It follows from (\ref{l-t4}) and (\ref{eq;def}) that for $k>0$ the probability measure $\widetilde \mu_{ k}$ is supported on $B_{1/4^k}^+$. Since the metric on $G^+$ is right invariant, the measure $\nu$ is supported on $B_{1/2}^+$.
Suppose $\nu=\varphi \dd \mu$, then $\mu_{  n}=\nu * \widetilde \mu_0 =\varphi* \mathbbm{1}_{B_1^+} \dd \mu$. So for any $h\in B_{1/2}^+$, we have
$$\varphi* \mathbbm{1}_{B_1^+}(h)=\int_{G^+}\varphi(h_1)\mathbbm{1}_{B_1^+}(h_1^{-1}h) d\mu(h_1)\ge \int_{B_{1/2}^+}\varphi(h_1)d\mu(h_1) =1.$$
\end{proof}

Now we are ready to prove Lemma \ref{l-int-ine}.
\begin{proof}[Proof of Lemma \ref{l-int-ine}]
	
 By Lemmas \ref{lem;contain} and  \ref{l-rd-conv},
  	\begin{align}\label{eq;temp}
  	\mu (\D_x(z,  I,  C\ell ) )\le \mu_\ell(\D_x(z,  I,  C\ell ) )\le
  	\mu_p(Z_p).
  	\end{align}
  	Now we are left to estimate $\mu_p(Z_p)$. For $1\le k \le p$ let
  \begin{equation}\notag
  s(k )=\int _{Z_k } u (f_{ t}  ^z   h_{ k }\cdots f_{ t}  ^z   h_1x)d\mu^{\otimes k }(h_1, \cdots, h_{ k}).
  \end{equation}
  Let
  \begin{align}\label{ine-iterated int}
  s(p+1)=\int_{Z_{p}}\left[
  \int_{B_1^+} u (f_{ t}  ^z   h_{p+1  }f_t^z h_{p }\cdots f_{ t}  ^z   h_1x) \dd\mu(h_{p+1} )\right ]d\mu^{\otimes p }(h_1, \cdots, h_{p}).
  \end{align}
  Then for every $1<  k\le p+1$,
\begin{align}\notag
	s(k )&\le \int_{Z_{k-1}}\left[
	 \int_{B_1^+} u (f_{ t}  ^z   h_{k  }f_t^z h_{k-1}\cdots f_{ t}  ^z   h_1x) \dd\mu(h_k )\right ]d\mu^{\otimes (k -1)}(h_1, \cdots, h_{k -1}).
\end{align}

If $k -1\in I$, then  $s(k )\le e^{-\alpha t}s(k -1)$ by (\ref{eq;contract}). If $k -1\not \in I$, then by (\ref{eq;c}) we have  $s(k )\le C s(k -1)$.
We apply this estimate to   $k=p+1, p, \cdots, 2$, then we have
 \begin{align*}
 	  	s(p+1)\le C^{(N-|I|)}e^{-|I| \alpha t} \int_{B_1^+} u (f_t hx\dd \mu (h))\le    C^{1+(1-\sigma) N }e^{-\sigma\alpha  t N }u (x)
 	 .
 \end{align*}
The choice of $\sigma$ in (\ref{eq;sigma}) implies that
 \begin{align}\label{eq;temp2}
 		s(p+1)\le C e^{-(1-\delta/2) \alpha t N}u (x).
 	\end{align}

 On the other hand, in view of  (\ref{ine-iterated int}),  (\ref{eq;c}) and the fact $p=\sup I $
 we have   \begin{align}\label{eq;che}
 s(p +1)\ge C^{-1} s(p)\ge C^{-1}\ell \cdot
 \mu_p(Z_p).
 \end{align}
  Therefore,  (\ref{eq;con}) follows from (\ref{eq;temp}), (\ref{eq;temp2}) and (\ref{eq;che}) and the observation $\ell \ge 1$.

 \end{proof}

\begin{proof}[Proof of Proposition \ref{l-main2}]
	As  before we fix  $ z$ and $ N$ as in the statement. Let $\sigma$ be as in  (\ref{eq;sigma}).
	Since $(B_1^+, \dist)$ is Besicovitch, there exists a covering  $\fU$
	of $\D_x({ z }, N, \sigma,  C^2\ell )$ by open balls of radius $e^{-{(\lambda+\delta) t N}}$ centered at $\D_x({ z }, N, \sigma, C^2\ell )$
	such that each element of $\D_x({ z }, N, \sigma, C^2 \ell )$ is covered by at most
	$E$ times.  By Lemma \ref{lem;add}, each $B\in \fU$ is contained in
	$\D_x({ z }, N, \sigma,  C\ell )$, so  in view of (\ref{eq;c+})
	\begin{align}\label{eq;tmd}
	\mu(	\D_x({ z }, N, \sigma, C\ell ))\ge\frac{ |\fU |}{E} \mu(B^+_{e^{-(\lambda+\delta) t N}})\ge \frac{ |\fU |}{C_0E} e^{-(\lambda+\delta) d  t N} .
	\end{align}
	
	On the other hand,
  since there are $2^N$ subsets  $I\subset \{1, \ldots, N \}$, by (\ref{eq;sum}), (\ref{eq;details}) and  Lemma \ref{l-int-ine},
  we have
  \begin{equation}\label{mu-dx+1}
  \mu\big(\D_x({ z }, N, \sigma,  C\ell )\big) \le C^2
   2^Ne^{-(1-\delta/2) \alpha tN}u (x)\le e^{-(1-\delta)\alpha tN}u (x).
  \end{equation}
  By (\ref{eq;tmd}) and (\ref{mu-dx+1}),
  \[
  |\mathfrak U| \le u(x) C_0C^2 E\cdot    e^{ ( d\lambda -\alpha+\delta  (d+\alpha)) t N}.
  \]
The conclusion now follows by taking
  $
  E_0=u(x)C_0 C^2 E.
  $

\end{proof}

\section{Upper bound of Hausdorff dimension}

In this section, we finish the proof of Theorem \ref{thm;general}. We will use the same notation as in
\S \ref{sec;first}   prior to Proposition \ref{l-main2}.
For $(z,h)\in B_1^c B_1^+ , \ell '>0 $ and $N\in \NN$,  let
$I_{N, \ell'}(z,h)$ denote the set of $n\in \{1,\ldots,N\}$ satisfying
$  u(f_{nt}zhx)\ge \ell '
$.
For  $x\in \ggm$, let
\begin{equation}\label{def-d+0}
\D^{0}_x(F^+, N, \sigma, \ell')=\{(z,h)\in B_{1/2}^c B_{1/2}^+ : |I_{N, \ell '}(z,h)|\ge \sigma N\}.
\end{equation}

\begin{lemma}\label{l-main}
Let $x\in \ggm$. Then there exist $0<\sigma<1$ and   $E_2\ge 1$     such that for any
$N\in \NN$
 the set $\D^{0}_x(F^+, N, \sigma, C^4 \ell )$ can be covered by  no more than $E_2e^{(d^c \lambda+d \lambda -\alpha+\delta (d^c+d+\alpha)) tN}$ open balls of radius
  $e^{-(\lambda+\delta) t N}$ in $G^cG^+$.
\end{lemma}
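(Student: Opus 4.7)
The plan is to reduce Lemma \ref{l-main} to Proposition \ref{l-main2} by a Fubini-type argument that discretizes the $G^c$-direction at the same scale $r := e^{-(\lambda+\delta)tN}$ used for the $G^+$-direction. First I would cover $B_{1/2}^c$ by open balls $\{B_r^c(z_0)\}$ of radius $r$ centered at points $z_0 \in B_{1/2}^c$; since $G^c$ is a Lie group of dimension $d^c$ and $B_{1/2}^c$ has bounded volume, no more than $E_1 r^{-d^c} = E_1 e^{d^c(\lambda+\delta)tN}$ such balls are needed (this is the same kind of Besicovitch/volume bound used before the statement of Proposition \ref{l-main2}).

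Next I would show that for each $z_0$ and every $z = z_0 z' \in B_r^c(z_0)$, the $h$-fiber of $\mathcal{D}^{0}_x(F^+, N, \sigma, C^4\ell)$ over $z$ lies inside $\mathcal{D}_x(z_0, N, \sigma, C^2\ell)$. The key identity is
\[
f_{nt} z h = z_0 f_{nt}^{z_0} z' h = w_n \, f_{nt}^{z_0} h, \qquad w_n := z_0 \bigl(f_{nt}^{z_0} z' f_{-nt}^{z_0}\bigr).
\]
By \equ{B0} one has $f_{nt}^{z_0} z' f_{-nt}^{z_0} \in B^c_{e^{\delta n t} r} \subset B_1^c$ for all $n \le N$ (since $e^{\delta t N} r = e^{-\lambda t N} \le 1$), so $w_n \in B_2^G$. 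Applying \equ{c} with $s = 0$ yields $u(w_n f_{nt}^{z_0} h x) \le C \, u(f_{nt}^{z_0} h x)$, whence $u(f_{nt}^{z_0} h x) \ge C^{-1} \cdot C^4 \ell = C^3 \ell \ge C^2 \ell$ at every index $n$ where $u(f_{nt} z h x) \ge C^4 \ell$. Since $h \in B_{1/2}^+$, this gives $h \in \mathcal{D}_x(z_0, N, \sigma, C^2\ell)$ with the same $\sigma$ provided by Proposition \ref{l-main2}.

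Finally, Proposition \ref{l-main2} covers each such fiber by at most $E_0 e^{(d\lambda - \alpha + \delta(d+\alpha))tN}$ open balls of radius $r$ in $B_1^+$. Combining the two coverings, $\mathcal{D}^{0}_x(F^+, N, \sigma, C^4\ell)$ sits inside at most
\[
E_1 e^{d^c(\lambda+\delta)tN} \cdot E_0 e^{(d\lambda - \alpha + \delta(d+\alpha))tN}
\]
product boxes $B_r^c(z_0) \cdot B_r^+(h_0)$, and each such box is contained in a ball of radius $O(r)$ in $G^c G^+$ via the bi-Lipschitz product structure identified in \S\ref{sec;3}. Absorbing all multiplicative constants into $E_2$ yields the asserted bound. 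The main obstacle is purely bookkeeping: the scale $r = e^{-(\lambda+\delta)tN}$ must be chosen to match Proposition \ref{l-main2} while still beating the worst-case expansion $e^{\delta t N}$ from \equ{B0}, so that the conjugated elements $w_n$ remain in $B_2^G$ uniformly in $n \le N$ and \equ{c} can be applied along the entire orbit.
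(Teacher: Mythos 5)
Your argument is essentially the paper's own proof: you establish the same fiberwise containment as \eqref{claim-w} using (\ref{eq;B0}) and (\ref{eq;c}), tensor a ball cover of $B_{1/2}^c$ with the Proposition~\ref{l-main2} cover of each fiber, and absorb the constants into $E_2$. One small notational point: since $\dist$ is right-invariant, a metric ball around $z_0$ consists of points $z'z_0$ (not $z_0 z'$) with $z'\in B_r^c$; your computation survives this swap unchanged with $w_n = (f_{nt}\,z'\,f_{-nt})\,z_0 \in B_2^G$.
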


\begin{proof}

Let  $0<\sigma<1$ and   $E_0\ge 1 $  so that Proposition
\ref{l-main2} holds.
We fix $N\in \NN$.

We  claim that:
for $W=B_{e^{-(\lambda+\delta) tN}}^c\cdot z\subset B_{1}^c$, we have
\begin{equation}\label{claim-w}
\Big( \D_x^{0}(F^+,N,\sigma, C^4\ell )\cap (W B_{1}^+)\Big)\subset \Big(W  \D_x(z  , N, \sigma, C^2\ell )\Big).
\end{equation}
Let $(z_1,h_1)\in W B_{1}^+$. Suppose that $1\le n\le  N$ and
$
u(f_{nt}z_1h_1x)\ge C^4\ell
$.
In view of (\ref{eq;B0}) and (\ref{eq;c}) we have
\begin{equation*}
\begin{split}
u(f_{ nt}^zh_1x)=u(z^{-1}f_{nt}zh_1x)\ge C^{-1}  u(f_{nt}zh_1x)  \\
=C^{-1}u(f_{nt}(zz_1^{-1})f_{-nt}\cdot  f_{nt}z_1h_1x)\ge C^{-2}u(f_{nt}z_1h_1x)\ge  C^2\ell .
\end{split}
\end{equation*}
In other words, we have proved that if  $n \in I_{N, C^4\ell }(z_1,h_1)$, then $u(f_{nt}^z h_1 x)\ge C^2\ell $.  Therefore, if $(z_1, h_1)$ belongs to the left hand side of   \eqref{claim-w} then it also belongs to the right hand side.

Since  $(B_1^c, \mathrm{dist})$ is also Besicovitch, there exists
$E_1\ge 1$ such that for all $0<r\le 1$, $B_{1/2}^c$ can be covered by no more than
  $E_1 r^{-d^c }$ open balls of radius $r$.
We fix a  cover  $\fU^c$ of $B_{1/2}^c$ that consists of open  balls of radius $e^{-(\lambda+\delta ) Nt}$  with  $|\fU^c|\le E_1 e^{d^c(\lambda+\delta) Nt}$.
 We assume each element of $\fU^c$ has nonempty intersection with $B_{1/2}^c$, then it is contained in $B_1^c$ in view of (\ref{eq;details}).
Let  $W_z\in \fU^c$ be a ball  centered at $z\in B_1^c$. Proposition \ref{l-main2} implies that  there exists a covering   $\fU_{ z}$ of
$\D_x(z  , N, \sigma, C^2\ell ) $ by open balls of radius $e^{-(\lambda+\delta)t N}$ such that
\begin{align*}
|\fU_{ z}|\le E_0 e^{d\lambda-\alpha+\delta(d+\alpha)}.
\end{align*}
In view of  claim \eqref{claim-w},
 the following class of sets
\begin{align}
\label{eq;W}
\{W_z  B: W_z\in \fU^c, B\in \fU_{ z}\}
\end{align}
 forms an open cover of $\D^{0}_x(F^+, N, \sigma, C^4 \ell)$.
 It is easily checked that there exists $E_1'\ge 1$ not depending on $N$ such that each element $W_zB$ of (\ref{eq;W})   can be covered by $E_1'$ open balls of
 radius $e^{-(\lambda+\delta)Nt }$ in $G^cG^+$.
Therefore the lemma holds with $E_2=E_0E_1E_1'$.
\end{proof}

\begin{theorem}
	\label{thm;simple}
	For any $x\in G/\Gamma$, the Hausdorff dimension of $ \D_x^{0}\df \{(z,h)\in B_{1/2}^c B_{1/2}^+:
	 zh x \in  \D(F^+, G/\Gamma) \}$ is at most $d^c+d -\frac{\alpha_1}{\lambda }$.
\end{theorem}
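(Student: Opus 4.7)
The plan is to exhibit $\D_x^{0}$ as a subset of a $\limsup$ of the sets $\D_x^{0}(F^+,N,\sigma,C^4\ell)$ produced in Lemma~\ref{l-main}, and then extract the Hausdorff dimension bound from the covering estimate proved there by a standard Borel--Cantelli-style computation.

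First I would fix any $\alpha<\alpha_1$, together with a corresponding EM height function $u$ as in Lemma~\ref{lem;key}, and extract the parameters $\sigma\in(0,1)$ and $E_2\ge 1$ from Lemma~\ref{l-main}. The crucial reduction is the following: every $(z,h)\in\D_x^{0}$ lies in $\D_x^{0}(F^+,N,\sigma,C^4\ell)$ for all sufficiently large $N$. Indeed, properness of $u$ (Lemma~\ref{lem;key}(1)) makes $K_M\df\{y:u(y)\le M\}$ compact for every $M$, and the local comparison (\ref{eq;c}) shows that each index $n$ with $u(f_{nt}zhx)\le C^4\ell$ gives a full interval $[nt,(n+1)t]$ on which the orbit lies in $K_{C^5\ell}$. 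Thus
\[
\frac{1}{N}\#\{n\le N:u(f_{nt}zhx)\le C^4\ell\}\;\le\;\frac{1}{Nt}\int_0^{Nt}\mathbbm{1}_{K_{C^5\ell}}(f_s zhx)\dd s,
\]
and the right-hand side tends to $0$ by definition of divergence on average. Consequently
\[
\D_x^{0}\;\subset\;\bigcap_{N_0\ge 1}\;\bigcup_{N\ge N_0}\D_x^{0}(F^+,N,\sigma,C^4\ell).
\]

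Next I would turn the covers from Lemma~\ref{l-main} into a Hausdorff measure bound. For any $\rho>0$, let $N_\rho$ be the smallest integer with $r_{N_\rho}\df e^{-(\lambda+\delta)tN_\rho}\le\rho$. The inclusion above, combined with the covers of Lemma~\ref{l-main}, yields
\[
\mathcal{H}^s_{\rho}(\D_x^{0})\;\le\;\sum_{N\ge N_\rho}E_2\exp\!\Bigl[\bigl((d^c+d)\lambda-\alpha+\delta(d^c+d+\alpha)-s(\lambda+\delta)\bigr)tN\Bigr].
\]
When the bracketed exponent is strictly negative the series converges and tends to $0$ as $\rho\to 0^+$, so $\mathcal{H}^s(\D_x^{0})=0$. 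Solving this inequality and letting $\delta\to 0^+$ gives $\mathcal{H}^s(\D_x^{0})=0$ for every $s>d^c+d-\alpha/\lambda$; finally letting $\alpha\uparrow\alpha_1$ (each choice producing its own height function and constants $\sigma,E_2$) yields $\dim\D_x^{0}\le d^c+d-\alpha_1/\lambda$.

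The only genuinely non-routine step in this plan is the continuous-to-discrete transfer in the first paragraph, and it is settled cleanly by (\ref{eq;c}); all the substantial work---the uniform contraction of the EM height function across the compact family $\{F_z:z\in B_1^c\}$ and the resulting covering estimate for $\D_x^{0}(F^+,N,\sigma,C^4\ell)$---has already been carried out in Lemmas~\ref{lem;key} and \ref{l-main}, so the present argument is essentially bookkeeping.
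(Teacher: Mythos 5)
Your proposal is correct and follows essentially the paper's own argument: the same reduction of divergence on average to membership in $\D_x^0(F^+,N,\sigma,C^4\ell)$ for all large $N$ (via properness of $u$ and the local comparison \eqref{eq;c}), followed by the covering estimate from Lemma~\ref{l-main} and a double limit $\delta\to 0$, $\alpha\uparrow\alpha_1$. The only cosmetic difference is that you bound the $s$-dimensional Hausdorff measure directly by summing a geometric series over $N\ge N_\rho$ using the weaker $\limsup$ inclusion, whereas the paper decomposes $\D_x^0\subset\bigcup_M W_M$ with $W_M=\bigcap_{N\ge M}\D_x^0(F^+,N,\sigma,C^4\ell)$ and invokes countable stability of Hausdorff dimension; both are routine and yield the same bound.
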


\begin{proof}
For each $\alpha<\alpha_1$ and $0<\delta<1$ we first choose $t>0$, a height function $u$ and $\ell, C \ge 1$ so that
Lemma \ref{lem;key}, (\ref{eq;c}), (\ref{l-t4}), (\ref{eq;B0}) and (\ref{eq;details}) hold. Then
there exists $0<\sigma<1$ and $E_2\ge 1 $ so that Lemma
\ref{l-main} holds.

It follows  from  Lemma \ref{lem;key} (1)(2) and the definition  of   $\D(F^+, G/\Gamma)$ that
$$\D_x^{0}\subset \bigcup_{M\ge 1} W_M\quad \mbox{where}\quad
W_{M}=
\bigcap _{N\ge M} \D^{0}_x(F^+, N, \sigma, C^4 \ell ).
  $$ 	

Recall that for any metric space $S$,
$$\dim_H S=\inf\left\{s>0: \inf_{\{B_i\}} \sum \rho(B_i)^s=0\right\},$$
where the latter $``\inf"$ is taken over all the countable  coverings $\{B_i\}$ of $S$ that consist of open  metric balls.
Then  in view of Lemma \ref{l-main}, we have
\begin{align*}
\dim_HW_M&\le \liminf_{N\to \infty}  \frac{[d^c\lambda+d\lambda -\alpha+\delta(d+d^c+\alpha)]t N+ \log E_2 }{\lambda t N}\\
& = d^c+d -\frac{\alpha}{\lambda}+\delta \frac{d+d^c+\alpha}{\lambda}.
\end{align*}
Therefore
\[
\dim_H \D_x^{0}\le d^c+d -\frac{\alpha}{\lambda}+\delta \frac{d+d^c+\alpha}{\lambda}.
\]
The conclusion follows by   first letting  $\delta\to 0$ and then letting  $\alpha\to \alpha_1$.
	
\end{proof}

\begin{lemma}
	\label{lem;stable}
	If $x\in \mathfrak D(F^+, G/\Gamma)$ and $h\in G^-$, then
	$hx \in \mathfrak D(F^+, G/\Gamma) $.
\end{lemma}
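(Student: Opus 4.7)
The plan is to show that conjugation by $f_t$ contracts elements of $G^-$ toward the identity, and then deduce that divergence on average is preserved under left translation by $G^-$ through a routine comparison of orbit averages. The key geometric fact to establish first is that $f_t h f_{-t} \to 1_G$ as $t \to \infty$ for every $h \in G^-$.

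To prove this contraction, I would write $h = \exp(v)$ with $v \in \mathfrak{g}^-$ and expand
\[
f_t h f_{-t} = \exp\bigl(\mathrm{Ad}(a_t)\mathrm{Ad}(u_t) v\bigr).
\]
Since $U_F$ and $A_F$ commute, $\mathrm{Ad}(u_t)$ preserves each Lyapunov subspace of $\mathrm{Ad}(a_1)$, and in particular preserves $\mathfrak{g}^-$. The unipotency of $U_F$ gives polynomial growth $\|\mathrm{Ad}(u_t) v\| = O(|t|^n \|v\|)$, while $\mathrm{Ad}(a_t)$ contracts $\mathfrak{g}^-$ at a uniform exponential rate. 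Combining these two facts---essentially the content of Lemma \ref{l-v+-} applied to $(\mathrm{Ad}, \mathfrak{g}^-)$---yields $\mathrm{Ad}(f_t) v \to 0$, and hence $f_t h f_{-t} \to 1_G$.

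For the second step, given a compact $K \subset G/\Gamma$ I would pick a precompact symmetric neighborhood $N$ of $1_G$ in $G$ and use the first step to find $T_0$ with $f_t h f_{-t} \in N$ whenever $t \ge T_0$. The rewriting
\[
f_t x = (f_t h f_{-t})^{-1}\,(f_t h x)
\]
then forces $\mathbbm{1}_K(f_t h x) \le \mathbbm{1}_{\overline{N} K}(f_t x)$ for all $t \ge T_0$, so
\[
\frac{1}{T}\int_0^T \mathbbm{1}_K(f_t h x)\,dt \le \frac{T_0}{T} + \frac{1}{T}\int_{T_0}^T \mathbbm{1}_{\overline{N} K}(f_t x)\,dt,
\]
and both terms on the right vanish as $T \to \infty$ because $x \in \mathfrak{D}(F^+, G/\Gamma)$ and $\overline{N} K$ is compact. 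Thus $hx \in \mathfrak{D}(F^+, G/\Gamma)$.

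The only genuinely delicate point is the first step: verifying that the polynomial growth introduced by the unipotent part $u_t$ of $f_t$ is really dominated by the exponential contraction of $a_t$ on $\mathfrak{g}^-$. This is not automatic when $U_F$ is nontrivial, but it follows precisely because $\mathrm{Ad}(u_t)$ respects the Lyapunov decomposition of $\mathrm{Ad}(a_1)$---the same structural observation that underlies the analysis in Section \ref{sec;3}.
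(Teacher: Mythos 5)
Your proposal is correct and takes essentially the same approach as the paper: the paper cites Lemma \ref{l-v+-} directly to conclude $\dist(f_t h f_{-t}, 1_G) \to 0$ (which is exactly your first step) and then asserts the lemma follows, whereas you additionally spell out the routine comparison of orbit averages that the paper leaves implicit.
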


\begin{proof}
	Note that  by Lemma \ref{l-v+-},
	\[
	\dist (f_t hx, f_t x)\le \dist(f_t h f_{-t} , 1_G)\to 0
	\]
	as $t\to \infty$. Therefore the lemma holds.
\end{proof}

\begin{proof}[Proof of Theorem \ref{thm;general}]
We will show that \[
\dim_H \D(F^+, G/\Gamma)\le d^-+d^c+d -\frac{\alpha_1}{\lambda}.
\]	
In view of the  local nature of Hausdorff dimension and the definition of the metric on $G/\Gamma$,
it suffices to prove that for any $x\in G/\Gamma$
\[
\dim_H \{g\in B_{r}^G: gx \in \D(F^+, G/\Gamma)   \}\le d^-+d^c+d -\frac{\alpha_1}{\lambda}.
\]
where $r<1$  so that $B_{r}^G\subset B^-_1 B_{1/2}^cB_{1/2}^+$.
By Lemma
\ref{lem;stable},
\[
\{g\in B_{r}^G: gx \in \D(F^+, G/\Gamma)   \}\subset B^-_1\D_x^0,
\]
whose Hausdorff dimension is bounded from above by $\dim_H\D_x^0+d^-$.\footnote{Here we are using Marstrand's product theorem again.
}
In view of Theorem \ref{thm;simple}, the Hausdorff dimension of $\D(F^+, G/\Gamma)$
is less than $d+d^c+d^--\frac{\alpha_1}{\lambda}$ which is strictly less than
the manifold dimension of $G/\Gamma$.

\end{proof}

\begin{remark}
It is worth to mention that, if $F=A_F$,  then the contraction property of the Benoist-Quint height function proved
 in \cite{s} will allow us to prove a stronger result. Namely, we can get
a nontrivial upper bound of the Hausdorff dimension of the intersection of $\D(F^+, \ggm)$ and orbits of the so-called $(F^+, \Gamma)$-expanding subgroups introduced in \cite{KW}.
But unfortunately, we are not able to prove a uniform contracting property for the Benoist-Quint height functions even in the example  mentioned at the end of  the introduction due to the existence of the unipotent part in the second $\SL_4(\RR)$
factor.
\end{remark}

\end{document}

We start with the following lemma
\begin{lemma}\label{def-c}
Consider the group $G^c\times G^+$ endowed with the right invariant Riemannian metric $\dist'=\dist_{G^c}\times \dist_{G^+}$.
There exists $\delta_0\in (0,1)$ such that, for any $n\in \NN$ and any $z_1, z_2\in B_{\delta_0}^c\times B_{\delta_0}^+$ with $\dist'(z_1,z_2)\le e^{-2\lambda   nT}\delta_0$,
\begin{equation}\label{gt-z1z2}
\Ad(f_{w, nT})(z_1z_2^{-1})\in B_1^c \times B_1^+
\end{equation}
 for all $w\in B_{\delta_0}^c$.
\end{lemma}
\begin{proof}
Choose $\delta_1\in(0,1)$ such that the map $\exp^{-1}$ is well-defined on $B_{\delta_1}^c\times B_{\delta_1}^+$ and is a homeomorphism onto its image. We have
\begin{equation}\label{def-l1}
L_1:=\sup_{u\in _{\delta_1}^c\times B_{\delta_1}^+} \ \sup_{\substack{v\in \T_{u}G^c\times G^+ \\ \|v\|=1}}\max(\|\exp^{-1}(v)\|,\|\exp^{-1}(v)\|^{-1} )<\infty.
\end{equation}
A similar argument as in the proof of Lemma \ref{l-v+-} shows that
\begin{equation}\label{def-l2}
L_2':= \sup_{n\in \NN}\ \sup_{\substack{v\in \T_{u}G^c\times G^+ \\ \|v\|=1}}2e^{-2\lambda   nT}\|\Ad(f_{ nT})(v)\|<\infty.
\end{equation}

Set
\begin{equation}\label{def-delta}
 L_2=\max\{L_2', 1\} \text{ and } \delta_0= L_1^{-2}L_2^{-2}\delta_1.
 \end{equation}
Indeed, if $\dist'(z_1,z_2)\le e^{-2\lambda   nT}\delta_0$,  we have
\begin{equation*}
\dist'(1,z_1z_2^{-1})= \dist'(z_1,z_2)\le e^{-2\lambda   nT}\delta_0,
\end{equation*}
by the right invariance of $\dist'$. Then \eqref{gt-z1z2} follows from \eqref{def-l1}, \eqref{def-l2} and \eqref{def-delta}.

\end{proof}

\section{Key lemma}

Let $C=C_G(a_1)$ and $d=\dim U$.
Suppose  $N\subset B_1^C$ has upper box dimension
$l\ge 0$. In this section a box of radius $r$ is a set of the form
$B_r^C h B_r^U u$ where $ h\in B_1^C$ and $u\in B_1^U$.
Let
\begin{align*}
D_x(n,m; k)&=\{h \in N B_1^U: f(f_{it}hx)>k\mbox{ for all } n\le i\le m \},  \\
D_x(n; k)& =\{h\in NB_1^U: f(f_{it}hx)>k \mbox{ for all } n\le i\}
\end{align*}

\begin{lemma}
	There exist $k_0,m_0\in \NN$ such that for all $m\ge m_0$ the set
	$D_x( n, m; k)$ can be covered by no more than  $e^{(l+d-\epsilon)m \lambda   t}$ boxes of radius $e^{-m\lambda   t}$.
\end{lemma}

\begin{corollary}
	The upper box dimension of  $D_x(n;k)$  is less than or equal to $l+d-\epsilon$.
\end{corollary}

\begin{corollary}
	The Hausdorff dimension of $D_x(NB_1^U, \mathcal L)$ is less than or equal to $l+d-\epsilon$.
\end{corollary}

We claim that: there exists  $R>0$ such that, for any $M>0$,
\begin{equation}\label{claim-dist}
\sup_{\substack{z_1, z_2 \in U^c \\ \dist(z_1, z_2)\le e^{-\lambda   MT}} } \dist(1, \Ad(f_{MT})z_1z_2^{-1})\le R.
\end{equation}
Indeed, by the right invariance of the metric, we have
\begin{equation*}
\dist(1,z_1z_2^{-1})\le e^{-\lambda   MT}.
\end{equation*}
As $G^c$ commutes with $S$, we have
\begin{equation}\label{gt-ut}
\Ad(f_{MT})z_1z_2^{-1}=\Ad(u_{MT})z_1z_2^{-1}.
\end{equation}
Since $u_T$ acts on $G^c$ as a polynomial automorphism, there exists a polynomial $P$ such that, for any $M>0$,
\begin{equation}\label{dist-ut}
\sup_{z_1, z_2 \in U^c ,\dist(z_1, z_2)\le e^{-\lambda   MT} } \dist(1, \Ad(u_{MT})z_1z_2^{-1})
\le P(MT)e^{-M\lambda   T}.
\end{equation}
Thus, \eqref{claim-dist} follows from \eqref{gt-ut} and \eqref{dist-ut}.